\documentclass[reqno,12pt]{amsart}


\usepackage{enumerate}
\usepackage[margin=1in]{geometry}
\usepackage{ifpdf}
\usepackage{amsmath}
\usepackage{amsfonts}
\usepackage{amssymb}
\usepackage{amsthm}
\usepackage[ocgcolorlinks,hyperfootnotes=false,colorlinks=true,citecolor=blue,linkcolor=blue,urlcolor=blue]{hyperref}
\usepackage{setspace}
\usepackage{amsrefs}
\usepackage{nicefrac}
\usepackage{graphicx}
\usepackage{color}
\usepackage{mathtools}


\newcommand{\ignore}[1]{}


\renewcommand{\Re}{\operatorname{Re}}
\renewcommand{\Im}{\operatorname{Im}}


\newcommand{\abs}[1]{\left\lvert {#1} \right\rvert}
\newcommand{\sabs}[1]{\lvert {#1} \rvert}
\newcommand{\norm}[1]{\left\lVert {#1} \right\rVert}

\newcommand{\C}{{\mathbb{C}}}
\newcommand{\R}{{\mathbb{R}}}


\newcommand{\sO}{{\mathcal{O}}}


\newcommand{\rank}{\operatorname{rank}}


\newtheorem{thm}{Theorem}[section]

\newtheorem{prop}[thm]{Proposition}

\newtheorem{cor}[thm]{Corollary}

\newtheorem{lemma}[thm]{Lemma}

\theoremstyle{definition}

\theoremstyle{remark}



\author{Ji\v{r}\'{\i} Lebl}
\thanks{The first author was in part supported by NSF grant DMS-1362337 and
Oklahoma State University's DIG and ASR grants.}
\address{Department of Mathematics, Oklahoma State University,
Stillwater, OK 74078, USA}
\email{lebl@math.okstate.edu}

\author{Alan Noell}
\address{Department of Mathematics, Oklahoma State University,
Stillwater, OK 74078, USA}
\email{noell@math.okstate.edu}

\author{Sivaguru Ravisankar}
\address{School of Mathematics, Tata Institute of Fundamental Research,
Mumbai 400005, India}
\email{sivaguru@math.tifr.res.in}

\date{February 23, 2017}

\ifpdf
\hypersetup{
  pdftitle={Codimension two CR singular submanifolds and extensions of CR functions},
  pdfauthor={Jiri Lebl, Alan Noell, and Sivaguru Ravisankar},
  pdfsubject={Several Complex Variables},
  pdfkeywords={Extension of CR functions, Hartogs-Bochner, CR singularity, Levi-flat Plateau problem},
}
\fi

\title{Codimension two CR singular submanifolds and extensions of CR functions}

\keywords{Extension of CR functions, Hartogs-Bochner, CR singularity, Levi-flat Plateau problem}
\subjclass[2010]{32V40 (Primary), 32V25 (Secondary)}

\begin{document}


\begin{abstract}
Let $M \subset \C^{n+1}$, $n \geq 2$, be a real codimension two CR singular
real-analytic submanifold that is nondegenerate and holomorphically flat.
We prove that every
real-analytic function on $M$ that is CR outside the CR singularities
extends to a holomorphic function in a neighborhood of $M$.
Our motivation is to prove the following analogue of the Hartogs-Bochner theorem.
Let $\Omega \subset \C^n \times \R$, $n \geq 2$, be a bounded domain with
a connected real-analytic boundary such that $\partial \Omega$ has only nondegenerate CR singularities.
We prove that if $f \colon \partial \Omega \to \C$ is a real-analytic function that is CR at CR points of $\partial \Omega$, then 
$f$
extends to a holomorphic function on a neighborhood of $\overline{\Omega}$
in $\C^n \times \C$.
\end{abstract}

\maketitle



\section{Introduction} \label{section:intro}

Let $\Omega \subset \C^n \times \R$, $n\ge 2$, be a bounded domain with real-analytic
boundary.  We ask when does a real-analytic function $f \colon \partial
\Omega \to \C$ extend to a function on $\overline{\Omega}$,
holomorphic along the complex directions in $\Omega$?  That is, the extended
function should be a CR function on $\Omega$.

An answer is a so-called Hartogs-Severi theorem,
which is the generalization of the Hartogs extension theorem to $\C^n \times
\R$.  In 1936, Brown~\cite{Brown:36} proved the following statement:
\emph{If $\Omega \subset \C^{n} \times \R$ is a bounded domain
with connected boundary and $f$ is a real-analytic CR function defined on a
neighborhood of $\partial \Omega$, then $f$ extends uniquely to
a real-analytic CR function on a neighborhood of $\overline{\Omega}$.}
Severi~\cite{Severi:32} proved this theorem earlier for $n=1$ under an
additional topological assumption.
Bochner~\cite{Bochner:54} proved a version of this theorem
for harmonic functions in $\C^n \times \R^\ell$.
Further work has been done in proving analogues of such theorems
in more general (that is, not just flat) CR manifolds, see
for example~\cite{HenkinMichel:02}*{Th{\'e}or{\`e}me A}.

If the function $f \colon \partial \Omega \to \C$ extends as a real-analytic
CR function to a whole neighborhood of the boundary, we apply
Hartogs-Severi.  It is classical (also due to Severi)
that at points where $\partial \Omega$ is a CR submanifold,
we get a local holomorphic extension if and only $f$ is CR.
Therefore, it is a necessary
condition for $f$ to be CR on $(\partial \Omega)_{CR}$, the CR points of
$\partial \Omega$.

The question therefore remains: \emph{What happens at the CR
singularities?}
We will study this local extension question for codimension two CR singular
$M \subset \C^{n+1}$.
In this paper
we prove that the condition of holomorphically flat,
that is $M \subset \C^n \times \R$, is sufficient for
the extension as long as the CR singularities
are nondegenerate.  For either a degenerate CR singularity or
a non-flat CR singularity counterexamples exist, see below.
The authors have studied the
smooth case of this problem in \cite{LNR} for elliptic CR singularities.

Let us discuss the local setup and some of the history of the study of
CR singular submanifolds.
Let $M \subset \C^{n+1}$ be a
real codimension two real-analytic submanifold.
CR singular submanifolds of codimension two were first studied in
$\C^2$ by
E.~Bishop~\cite{Bishop65}, who found that
such nondegenerate submanifolds $M$ are locally of the form
\begin{equation}
w = z\bar{z} + \lambda (z^2+\bar{z}^2) + O(3) ,
\end{equation}
where $0 \leq \lambda \leq \infty$ ($\lambda = \infty$ is interpreted
appropriately) is the
so-called \emph{Bishop invariant}.
The work of Bishop in $\C^2$,
especially in the elliptic case ($\lambda < \frac{1}{2}$), has been refined by
Kenig-Webster~\cite{KenigWebster:82},
Moser-Webster~\cite{MoserWebster83},
Moser~\cite{Moser85},
Huang-Krantz~\cite{HuangKrantz95},
and many others, see for example
Huang-Yin~\cite{HuangYin09} and the references therein for recent work.
For work in higher dimensions, especially in codimension two,
see Huang-Yin~\cites{HuangYin09:codim2,HuangYin:flattening1,HuangYin:flattening2},
Gong-Lebl~\cite{GongLebl},
Burcea~\cites{Burcea,Burcea2},
Dolbeault-Tomassini-Zaitsev~\cites{DTZ,DTZ2},
Coffman~\cite{Coffman},
Slapar~\cite{Slapar:16},
and the authors themselves~\cite{LNR}.

A real-analytic codimension two submanifold $M \subset \C^{n+1}$ with a CR singularity at 0
can always be written in suitable
holomorphic coordinates $(z,w) \in \C^n \times \C$ as
\begin{equation}
w = \rho(z,\bar{z}) .
\end{equation}
The submanifold $M$ is said to be \emph{holomorphically flat} if it is a subset
of a real-analytic Levi-flat hypersurface.  In this case, we arrange $\rho$ to
be real-valued.
The submanifold $M$ cannot in general be flattened.
In dimension 3 and higher, existence of such a Levi-flat hypersurface requires
nongeneric conditions on $M$, see \cites{DTZ,HuangYin:flattening1,HuangYin:flattening2}.
A holomorphically flat submanifold $M$ is precisely the condition
$\partial \Omega \subset \C^{n} \times \R$ for the global problem.

Harris~\cite{Harris} studied the
extension of real-analytic CR functions near CR singularities.
For $M \subset \C^m$, Harris provides a
criterion for extension if
$\dim_{\R} M \geq m$, but the condition can be difficult to verify.
Similarly in \cite{LMSSZ} the nonextensibility was studied for
a certain class of CR singular submanifolds.
In particular, a local extension does not always exist even for
nondegenerate manifolds.  The theorem below shows, however, that for
nondegenerate manifolds being
holomorphically flat is a sufficient condition.

Write $M$ as
\begin{equation} \label{eq:Mgrafunnor}
w = A(z,\bar{z}) + B(z,z) + \overline{B(z,z)} + E(z,\bar{z}) ,
\end{equation}
where $A$ and $B$ are quadratic forms and $E$ is $O(3)$.
We say $M$ has a \emph{nondegenerate CR singularity} at the origin
if the Hermitian form $A$ is nondegenerate.
For a CR singular submanifold, let us write $M_{CR}$ for the set of CR points
of $M$.

\begin{thm} \label{thm:mainlocal}
Let $M \subset \C^{n+1}$, $n\ge 2$, be a holomorphically flat real
codimension two
real-analytic submanifold with a
nondegenerate
CR singularity
at $0 \in M$.

Suppose $f \in C^{\omega}(M)$
such that $f|_{M_{CR}}$ is a CR function.
Then there exists a neighborhood $U$ of $0 \in \C^{n+1}$  and
$F \in \sO(U)$ such that $F|_{M \cap U} = f$.
\end{thm}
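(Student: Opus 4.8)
The plan is to reduce the statement to a factorization problem for the complexification of $f$, and then to resolve that problem \emph{at} the CR singularity using the nondegeneracy of $A$. Since $M$ is holomorphically flat I may take $\rho$ real-valued and write $M=\{w=\rho(z,\bar z)\}\subset\C^n\times\R$. A direct computation of $T^{0,1}M$ shows that $M_{CR}=\{z : d\rho(z)\neq 0\}$ and that $f|_{M_{CR}}$ being CR is equivalent to $\partial_{\bar z}f$ being pointwise proportional to $\partial_{\bar z}\rho$ on $M_{CR}$. Thus it suffices to produce a holomorphic $F(z,w)$ near $0$ with $F\bigl(z,\rho(z,\bar z)\bigr)=f(z,\bar z)$, that is, to factor $f$ through the map $\pi(z,\bar z)=(z,\rho(z,\bar z))$.

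First I would complexify. Treating $\bar z=\zeta$ as an independent variable, real-analyticity gives holomorphic germs $\rho(z,\zeta)$ and $f(z,\zeta)$ at $0\in\C^{2n}$, with $\rho(z,\zeta)=z^{T}\mathbf a\,\zeta+B(z,z)+\overline B(\zeta,\zeta)+O(3)$, where $\mathbf a$ is the invertible matrix of the nondegenerate Hermitian form $A$ and $\overline B$ is the complexification of $\overline{B(z,z)}$. Since $\partial_\zeta\rho(z,0)=\mathbf a^{T}z+O(|z|^2)\neq 0$ for small $z\neq 0$, the analytic set $\{\partial_\zeta\rho=0\}$ is proper, and the proportionality above propagates to the identity $\partial_\zeta f\wedge\partial_\zeta\rho\equiv 0$ by analytic continuation. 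Equivalently, the multiplier $\mu:=\partial_{\zeta_k}f/\partial_{\zeta_k}\rho$ is independent of $k$ and defines a holomorphic function on the complement of $\{\partial_\zeta\rho=0\}$; off that set $f$ is constant along the fibers of $\pi$, so the implicit function theorem yields a holomorphic $F$ with $f=F(z,\rho)$ there.

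The hard part will be to extend $F$ holomorphically across the CR singularity, i.e. across $\{\partial_\zeta\rho=0\}$ and in particular to the slice $\{z=0\}$, where $\pi$ is not a submersion and no holomorphic section of $\pi$ through the origin exists. This is exactly where nondegeneracy and the hypothesis $n\geq 2$ are decisive. Because $\mathbf a$ is invertible, the $n$ equations $\partial_{\zeta_k}\rho=0$ can be solved for $z$ by the implicit function theorem, exhibiting $\{\partial_\zeta\rho=0\}$ as an $n$-dimensional graph over $\zeta$, hence an analytic set of complex codimension $n\geq 2$. Therefore the multiplier $\mu$, being holomorphic off a set of codimension at least two, extends holomorphically to a full neighborhood of $0$ by the second Riemann extension theorem. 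Differentiating $\partial_{\zeta_j}f=\mu\,\partial_{\zeta_j}\rho$ and using symmetry of second derivatives gives $\partial_\zeta\mu\wedge\partial_\zeta\rho\equiv 0$, so $\mu$ obeys the same functional dependence as $f$; applying the same codimension-two reasoning to the Taylor coefficients in $\zeta$, I would show that both $f$ and $\mu$ factor holomorphically through $\rho$, producing a holomorphic germ $F(z,w)$ at $0$ with $\partial_w F(z,\rho)=\mu$ and $F(z,\rho)=f$. Controlling the convergence of this factorization uniformly up to the singular point — equivalently, ruling out the ``horn'' phenomenon in which $F$ is a priori pinned down only on $\{|w|\lesssim|z|\}$ — is the technical heart, and it is precisely what real-analyticity of $f$ together with the invertibility of $\mathbf a$ supply.

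Finally, with $F\in\sO(U)$ on a neighborhood $U$ of $0\in\C^{n+1}$, restricting the holomorphic identity $F(z,\rho(z,\zeta))=f(z,\zeta)$ to the totally real set $\zeta=\bar z$ yields $F(z,\rho(z,\bar z))=f(z,\bar z)$, that is $F|_{M\cap U}=f$, as required. Both the use of real-valued $\rho$ (holomorphic flatness) and of the invertibility of $\mathbf a$ (nondegeneracy) are essential, and the counterexamples cited in the introduction show the conclusion fails when either hypothesis is dropped.
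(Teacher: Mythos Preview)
Your approach is genuinely different from the paper's, and the setup is correct: after complexifying, the CR condition becomes $\partial_\zeta f\wedge\partial_\zeta\rho\equiv 0$, the critical set $\{\partial_\zeta\rho=0\}\subset\C^{2n}$ is a graph of complex codimension $n\geq 2$ by nondegeneracy of $\mathbf a$, and $\mu$ extends across it by the second Riemann theorem. But the step you yourself flag as ``the technical heart'' --- extending $F$ from the horn to a full neighborhood of the origin in $\C^{n+1}$ --- is not carried out. The sentence ``it is precisely what real-analyticity of $f$ together with the invertibility of $\mathbf a$ supply'' is an assertion, not an argument, and ``applying the same codimension-two reasoning to the Taylor coefficients in $\zeta$'' is too vague to be one. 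Iterating the (correct) observation $\partial_\zeta\mu\wedge\partial_\zeta\rho=0$ produces the successive $w$-derivatives of $F$ along the image of $\pi$, but that only gives a formal series in $w$; its convergence at $(0,0)$ is exactly the point at issue. You also gloss over whether $F$ is single-valued on the horn: $\partial_\zeta f\wedge\partial_\zeta\rho=0$ makes $f(z,\cdot)$ \emph{locally} constant on the fibers of $\rho(z,\cdot)$, and you need connectedness of those fibers (or a substitute) to define $F$ unambiguously.

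The paper resolves the difficulty by an entirely different mechanism. It first proves a polynomial extension lemma on the quadric model $w=Q(z,\bar z)$ (Lemma~\ref{lem:PolyExtn}) via case analysis: Coffman's normal forms in $\C^3$, elliptic directions and continuous families of attached discs for the generic types (Section~\ref{section:ellipticdir}), and a direct dimension count of $CR^d(M)$ for the remaining diagonalizable and completely parabolic cases (Section~\ref{section:diagonalizable}). That lemma is then used to build a \emph{formal} power series $F(z,w)$ degree by degree (Proposition~\ref{prop:FormalPowerSeries}), and convergence is established separately by restricting to generic complex lines $z=c\xi$, reducing to a one-variable convergence statement (Lemma~\ref{lem:ConfFormalPowerSeries}), and finishing with a Baire category argument. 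Your route is appealing because it bypasses all the quadric-model casework, but the entire difficulty is then compressed into the horn-extension step that you leave open. If you want to pursue it, one plausible line is to expand $F$ on the horn in powers of $w-\rho(z,0)$, extend each coefficient holomorphically across $\{z=0\}$ by Hartogs (this is where $n\geq 2$ enters), and recover uniform Cauchy estimates via the maximum principle; but that argument, or some replacement for it, has to actually appear.
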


The hypotheses of the theorem are natural.
Real-analyticity is clearly required for the analytic extension of $F$.
Nondegeneracy is also required since, as mentioned in \cite{LNR},
when $M$ is given by $w = \norm{z}^4$, the function $\sqrt{\Re w}$
is real-analytic and CR on $M$ (on $M$ it is equal to $\norm{z}^2$);
however, it cannot extend to a neighborhood of the origin as a holomorphic
function as such an extension would have to necessarily be a branch of
$\sqrt{w}$.  If $n =1$, the CR condition is vacuous, and the function 
$\bar{z}$ provides a counterexample.  

There do exist CR singular $M$ for which CR functions do not
extend.
For example, in \cite{LMSSZ} it was proved that a CR singular submanifold that
is a diffeomorphic image under a CR map of a CR submanifold admits
non-extensible function.
In light of our theorem above, such images are not nondegenerate and holomorphically flat.

With the local result we obtain the following global version.

\begin{thm} \label{thm:mainglobal}
Let $\Omega \subset \C^{n} \times \R$, $n\ge 2$, be a bounded domain
with real-analytic boundary such that $\partial \Omega$
is connected and
has only
nondegenerate
CR singularities.

\nopagebreak
Let $f \in C^{\omega}(\partial\Omega)$ be
such that $f|_{{(\partial \Omega)}_{CR}}$ is a CR function.
Then there exists an open set $U \subset \C^{n} \times \C$
with $\overline{\Omega} \subset U$ and
$F \in \sO(U)$ such
that $F|_{\partial \Omega} = f$.
\end{thm}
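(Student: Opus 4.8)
The plan is to assemble the local extension furnished by Theorem~\ref{thm:mainlocal} with the classical Hartogs--Severi theorem of Brown~\cite{Brown:36}. The first observation is that $\partial\Omega$ is a real codimension two real-analytic submanifold of $\C^{n+1}=\C^n\times\C$: it is a real-analytic hypersurface sitting inside the Levi-flat hypersurface $\C^n\times\R=\{\Im w=0\}$, so it is holomorphically flat, and by hypothesis all its CR singularities are nondegenerate. Hence at each CR singular point $p\in\partial\Omega$ Theorem~\ref{thm:mainlocal} provides a neighborhood $U_p\subset\C^{n+1}$ and $F_p\in\sO(U_p)$ with $F_p=f$ on $\partial\Omega\cap U_p$, while at each CR point $q\in\partial\Omega$ the hypothesis that $f$ is CR gives, by the classical local extension of real-analytic CR functions, a neighborhood $U_q$ and $F_q\in\sO(U_q)$ with $F_q=f$ on $\partial\Omega\cap U_q$.

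Next I would glue these pieces. On an overlap two of the local extensions are holomorphic and agree on $\partial\Omega$; since the CR singular points form a proper real-analytic subset, $(\partial\Omega)_{CR}$ is dense and $\partial\Omega$ is generic there, so $\partial\Omega$ is a set of uniqueness for holomorphic functions and the two extensions coincide on the connected overlap. The local extensions therefore patch to a single $\tilde F\in\sO(W)$ on an open neighborhood $W$ of $\partial\Omega$ in $\C^{n+1}$ with $\tilde F|_{\partial\Omega}=f$.

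Now restrict to the hypersurface $\C^n\times\R$. As the restriction of the holomorphic function $\tilde F$ to a real-analytic CR submanifold, $h:=\tilde F|_{(\C^n\times\R)\cap W}$ is a real-analytic CR function defined on a neighborhood of $\partial\Omega$ within $\C^n\times\R$. Because $\Omega\subset\C^n\times\R$ is bounded with connected boundary, Brown's theorem extends $h$ to a real-analytic CR function $g$ on a neighborhood $V$ of $\overline{\Omega}$ in $\C^n\times\R$; in particular $g|_{\partial\Omega}=f$. Finally, writing $w=u+iv$, a real-analytic CR function on $\C^n\times\R=\{v=0\}$ is holomorphic in $z$ and real-analytic in $u$, so replacing $u$ by $w$ in its convergent power series yields a holomorphic $F\in\sO(U)$ on a complex neighborhood $U$ of $V$ with $F|_{\C^n\times\R}=g$ near $V$. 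Then $\overline{\Omega}\subset V\subset U$ and $F|_{\partial\Omega}=f$, which is the claim.

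The step that I expect to demand the most care is the gluing: one must check that $\partial\Omega$ really is a set of uniqueness for holomorphic functions so that the two types of local extension---from Theorem~\ref{thm:mainlocal} at CR singular points and from the classical criterion at CR points---match on overlaps and assemble into a single-valued $\tilde F$. The remaining steps are direct invocations of Theorem~\ref{thm:mainlocal}, of Brown's theorem~\cite{Brown:36}, and of the elementary complexification of a real-analytic CR function on the generic real hypersurface $\C^n\times\R$.
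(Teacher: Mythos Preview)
Your proposal is correct and follows exactly the same route as the paper's own proof, which is stated in two sentences right after the theorem: apply Theorem~\ref{thm:mainlocal} (together with the classical extension at CR points) to push $f$ to a holomorphic function on a neighborhood of $\partial\Omega$, then invoke the Hartogs--Severi theorem of Brown. You simply spell out in more detail the gluing (via genericity of $\partial\Omega$ at CR points) and the passage between the CR extension on $\C^n\times\R$ and the holomorphic extension on $\C^{n+1}$, steps the paper leaves implicit.
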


The proof of the theorem follows from the local result stated earlier and the Hartogs-Severi theorem.
Theorem~\ref{thm:mainlocal} implies that $f$ extends from $\partial \Omega$
to a neighborhood, and hence Hartogs-Severi theorem applies.

Let us mention the Levi-flat analogue of the Plateau problem
first studied by Dolbeault-Tomassini-Zaitsev~\cites{DTZ,DTZ2}.  That is,
when does a codimension two submanifold bound a Levi-flat hypersurface?
Using our global result we obtain the following singular solution.

\begin{cor}
Suppose $\Omega \subset \C^{n} \times \R$, $n\ge 2$, is a bounded domain with
real-analytic boundary, and $M = f(\partial \Omega) \subset \C^{n+1}$ is the image
of a real-analytic map $f$ that is CR on ${(\partial \Omega)}_{CR}$.
Suppose $\partial \Omega$ is connected and all
CR singularities of $\partial \Omega$ are nondegenerate.

Then there exists a neighborhood $U \subset \C^{n+1}$ of $\overline{\Omega}$
and a holomorphic map $F \colon U \to \C^{n+1}$ such that
$F|_{\partial \Omega} = f$.  Wherever $F(\overline{\Omega})$ is a smooth real-hypersurface,
it is Levi-flat.
\end{cor}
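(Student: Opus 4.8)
The plan is to obtain the holomorphic filling by applying Theorem~\ref{thm:mainglobal} to each component of $f$, and then to recognize the image $F(\overline{\Omega})$ as a union of holomorphic images of the complex slices of $\C^n \times \R$, from which Levi-flatness on the smooth locus follows immediately.

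First I would produce $F$. Writing $f = (f_1,\dots,f_{n+1})$ with $f_j \in C^\omega(\partial\Omega)$, and using that a map into $\C^{n+1}$ is CR precisely when each of its components is CR, each $f_j$ restricts to a CR function on $(\partial\Omega)_{CR}$. The hypotheses on $\Omega$ and $\partial\Omega$ are exactly those of Theorem~\ref{thm:mainglobal}, so that theorem gives, for each $j$, a neighborhood $U_j \supset \overline{\Omega}$ and $F_j \in \sO(U_j)$ with $F_j|_{\partial\Omega} = f_j$. Putting $U = \bigcap_{j} U_j$ and $F = (F_1,\dots,F_{n+1})$ yields the desired holomorphic map $F \colon U \to \C^{n+1}$ with $F|_{\partial\Omega} = f$.

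For the second assertion, the key point is that $\overline{\Omega}$ lies in the Levi-flat hypersurface $\C^n \times \R \subset \C^{n+1}$, which is foliated by the complex hyperplanes $L_t = \C^n \times \{t\}$, $t \in \R$. Since $F$ is holomorphic and each $L_t$ is a complex submanifold of $U$, the restriction of $F$ to $L_t \cap U$ is holomorphic; hence it maps the slice $\Omega_t = \Omega \cap L_t$ to a complex analytic subset of $\C^{n+1}$, and wherever this restriction is an immersion the image is a smooth complex hypersurface of complex dimension $n$. I would then show that through every smooth point of $S := F(\overline{\Omega})$ there passes such a complex hypersurface contained in $S$, which is exactly the condition that makes $S$ Levi-flat there. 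Indeed, because $F$ is real-analytic, the set $\Sigma \subset \Omega$ where the real differential $dF$ has rank less than $2n+1$ is a real-analytic subvariety; since $S$ is $(2n+1)$-dimensional it cannot be covered by the images $F(\Sigma)$ and $F(\partial\Omega)$, both of real dimension at most $2n$, so $\Sigma$ is proper and the points of $S$ of the form $F(q)$ with $q \in \Omega \setminus \Sigma$ are dense in the smooth locus. At such a point $q \in \Omega_t$ the full-rank condition forces $F|_{\Omega_t}$ to be a holomorphic immersion, so $F(\Omega_t)$ is locally a smooth complex hypersurface lying in $S$ and passing through $F(q)$; a complex hypersurface contained in a smooth real hypersurface must agree with its complex tangent space and therefore annihilates the Levi form at that point.

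The main obstacle I expect is the bookkeeping just sketched: verifying that the smooth points of $S$ are indeed filled out, densely, by full-rank images of interior slices rather than by the lower-dimensional exceptional loci $F(\Sigma)$ and $F(\partial\Omega) = M$. Once this is in place, the vanishing of the Levi form on a dense subset of the smooth locus extends by continuity to the whole smooth locus, giving that $F(\overline{\Omega})$ is Levi-flat wherever it is a smooth real hypersurface; the final implication, that a contained complex hypersurface kills the Levi form, is the standard local computation with a defining function.
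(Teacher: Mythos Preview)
Your proposal is correct, and its first step is exactly the paper's approach: the paper does not write out a proof of this corollary at all, treating it as an immediate consequence of Theorem~\ref{thm:mainglobal} applied componentwise. Your remaining paragraphs supply a careful justification of the Levi-flatness assertion---that the holomorphic image of the sliced domain $\overline{\Omega}\subset \C^n\times\R$ is foliated by complex hypersurfaces off a lower-dimensional exceptional set, and that this forces the Levi form to vanish on the smooth locus by density and continuity---which the paper simply leaves implicit as a standard fact about holomorphic images of Levi-flat hypersurfaces.
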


A natural question is to ask what happens to the theorems in the smooth
case.
While we cannot hope for an extension to a holomorphic function on
$\C^n \times \C$, we may at least hope for a smooth CR extension to
$\C^n \times \R$ along the lines of \cite{LNR}.
In the local case we may hope for an extension to
at least one side as we obtained for elliptic submanifolds.
Furthermore, the Hartogs-Severi theorem fails for smooth CR functions.
Therefore, even if one could extend locally,
for the global theorem to hold in the smooth
case we would require at least some extra topological or geometric
restrictions on
$\partial \Omega$.

The paper has the following structure.  We first study
the normal form for the quadratic part using the results of
Coffman in section \ref{section:quadnormform}.  We also state the
polynomial problem we wish to solve on the quadric model manifolds.
In sections \ref{section:ellipticdir} and
\ref{section:diagonalizable}, we solve the polynomial extension problem
on the model.  In section \ref{section:local} we prove the local extension
result that is the main theorem of this paper.  Finally in section
\ref{section:hartogsseveri} we discuss the failure of the global
extension in the smooth case, and for completeness provide a sketch of a proof the
Hartogs-Severi result in our setting.


\section{Quadratic normal form and the CR singularity} \label{section:quadnormform}

Let $M$ be a
nondegenerate
holomorphically flat
CR singular submanifold written as in \eqref{eq:Mgrafunnor}.  We will call
the submanifold $M^{quad}$ given by
\begin{equation}\label{eq:ModelM}
w = A(z,\bar{z}) + B(z,z) + \overline{B(z,z)} ,
\end{equation}
 the \emph{quadric model} of $M$.

As $A$ is a Hermitian nondegenerate form we diagonalize it and
write $M$ as
\begin{equation}
w = \sum_{j=1}^{\ell} \abs{z_j}^2 -
\sum_{j=\ell+1}^{n} \abs{z_j}^2 +
\sum_{j,k=1}^{n} b_{jk} z_j z_k
+
\sum_{j,k=1}^{n} \bar{b}_{jk} \bar{z}_j \bar{z}_k
+
E(z,\bar{z}) ,
\end{equation}
where we make $b_{jk} = b_{kj}$.  We can arrange for
$\ell \geq \frac{n}{2}$ to make $\ell$ an invariant.

The transformation that diagonalizes $A$ (using $*$-congruence) acts via
congruence on
the matrix $B = [ b_{jk} ]$, which cannot be in general diagonalized.
A normal form for $B$ under linear transformations keeping $A$ diagonal
can be found by classical linear algebra.  The normal form for the two
matrices under biholomorphic transformations requires further computation.
For $n=1$, the normal form for the quadratic part
is the classical Bishop normal form
\begin{equation}
w = \abs{z}^2 +
\lambda (z^2 + \bar{z}^2 ) + E(z,\bar{z}),  \qquad \lambda \geq 0 .
\end{equation}
The number $\lambda$ is a holomorphic invariant, called the
\emph{Bishop invariant}.  For $\lambda < \frac{1}{2}$, $M$ is
called \emph{elliptic},
for
$\lambda = \frac{1}{2}$ it is called
\emph{parabolic}, and for $\lambda > \frac{1}{2}$ it is called
\emph{hyperbolic}.  Normally $\lambda=\infty$ is allowed, although we do
not consider it nondegenerate in our setting.  It would be interpreted
appropriately as $w = z^2 + \bar{z}^2 + E(z,\bar{z})$ and also
called \emph{hyperbolic}.

For $n=2$, that is in $\C^3$, Coffman~\cite{Coffman} computed the quadratic
normal form of all $M$ (including degenerate $M$, although we will not need
these).

We will see that understanding $\C^2$ and $\C^3$ cases are key in understanding
the extension problem.  In $\C^3$, the normal form as per Coffman is one of the
following forms.  First, if $A$ is positive definite, then $B$ can be
diagonalized:
\begin{equation*} \label{eq:typeP}
\tag{P}
w = \abs{z_1}^2 + \abs{z_2}^2
+ \lambda_1 ( z_1^2 + \bar{z}_1^2 )
+ \lambda_2 ( z_2^2 + \bar{z}_2^2 )
+ E(z,\bar{z}), \qquad 0 \leq \lambda_1 \leq \lambda_2 .
\end{equation*}
When $A$ is of mixed signature, Coffman obtains 3 different cases.
First the diagonalizable:
\begin{equation*}\label{eq:typeM.I}
\tag{M.I}
w = \abs{z_1}^2 - \abs{z_2}^2
+ \lambda_1 ( z_1^2 + \bar{z}_1^2 )
+ \lambda_2 ( z_2^2 + \bar{z}_2^2 )
+ E(z,\bar{z}), \qquad 0 \leq \lambda_1 \leq \lambda_2 .
\end{equation*}
Then
\begin{equation*}\label{eq:typeM.II}
\tag{M.II}
w = \abs{z_1}^2 - \abs{z_2}^2
+ \lambda ( z_1z_2 + \bar{z}_1 \bar{z}_2 )
+ E(z,\bar{z}), \qquad \lambda > 0 .
\end{equation*}
And finally
\begin{equation*}\label{eq:typeM.III}
\tag{M.III}
w = \abs{z_1}^2 - \abs{z_2}^2
+ \frac{1}{2} ( z_1^2 + z_2^2 + \bar{z}_1^2 + \bar{z}_2^2 )
+ z_1z_2
+ \bar{z}_1 \bar{z}_2
+ E(z,\bar{z}) .
\end{equation*}

Let us find the CR singular set of the quadric models, that is, when $E=0$.
Write $Q(z,\bar{z})$ for the quadratic part, and therefore the model
$M^{quad}$ is
the submanifold given by $w = Q(z,\bar{z})$.  The CR singularity of
$M^{quad}$ occurs when
$\bar{\partial} (Q-w) = 0$ and
$\bar{\partial} (\bar{Q}-\bar{w}) = 0$.  Therefore it occurs when
$M$ is tangent to a plane where $w$ is constant.  Because $Q$ is quadratic
this only occurs at $w=0$.  As $Q$ is real, the CR singularity occurs
precisely for $w=0$ and the $z$ for which $\bar{\partial} Q = 0$.

Let us start with \eqref{eq:typeP}.  We compute
\begin{equation}
0 = \bar{\partial} Q =
( z_1 + 2 \lambda_1 \bar{z}_1 ) d\bar{z}_1 +
( z_2 + 2 \lambda_2 \bar{z}_2 ) d\bar{z}_2 .
\end{equation}
The equations $z_1 + 2 \lambda_1 \bar{z}_1 = 0$ and $z_2 + 2 \lambda_2 \bar{z}_2 = 0$ have
a unique solution $z_1 = z_2 = 0$ if and only if
$\lambda_1 \not= \frac{1}{2}$ and
$\lambda_2 \not= \frac{1}{2}$.  If $\lambda_j = \frac{1}{2}$, then a CR
singularity also occurs when $\Re z_j = 0$.
In other words, either the CR singularity is
a real 1-dimensional line corresponding to when only one of the $\lambda_j$ is
$\frac{1}{2}$, or it is a totally real 2-dimensional
submanifold of $\{ w = 0 \}$ when both $\lambda_j$ are $\frac{1}{2}$.

We move onto \eqref{eq:typeM.I}.  We compute
\begin{equation}
0 = \bar{\partial} Q =
( z_1 + 2 \lambda_1 \bar{z}_1 ) d\bar{z}_1 +
( - z_2 + 2 \lambda_2 \bar{z}_2 ) d\bar{z}_2 .
\end{equation}
Again, we obtain
$\lambda_1 \not= \frac{1}{2}$ and
$\lambda_2 \not= \frac{1}{2}$ for an isolated singularity, and the same
conclusion for the parabolic $\lambda_j = \frac{1}{2}$ cases.  The
difference is that if $\lambda_2 = \frac{1}{2}$ then the CR singularity is
at points where $\Im z_2 = 0$.

We move onto \eqref{eq:typeM.II}.  We compute
\begin{equation}
0 = \bar{\partial} Q =
( z_1 + \lambda \bar{z}_2 ) d\bar{z}_1 +
( - z_2 + \lambda \bar{z}_1 ) d\bar{z}_2 .
\end{equation}
The two equations $z_1 + \lambda \bar{z}_2 = 0$ and $-z_2 + \lambda \bar{z}_1 = 0$
always have only the unique solution $z_1 = z_2 = 0$, and so the CR
singularity is always isolated.

Finally we move onto \eqref{eq:typeM.III}.  We compute
\begin{equation}
0 = \bar{\partial} Q =
( z_1 + \bar{z}_1 + \bar{z}_2  ) d\bar{z}_1 +
( - z_2 + \bar{z}_2 + \bar{z}_1  ) d\bar{z}_2 .
\end{equation}
Again, the solution $z_1 = z_2 = 0$ is the unique one.

It will be useful to know what the set of CR singularities looks like in all
dimensions, not just $\C^3$.  Let us state what we can say in $n
\geq 2$.

\begin{lemma}\label{lem:singset}
Suppose $M \subset \C^{n+1}$, $n \geq 2$, given by
\begin{equation}
w = A(z,\bar{z}) + B(z,z) + \overline{B(z,z)},
\end{equation}
is a quadric holomorphically flat submanifold with
$A$ nondegenerate.
The set of CR singularities of $M$
is a totally real linear submanifold of the set $\{ w = 0 \}$
of real dimension at most~$n$.
\end{lemma}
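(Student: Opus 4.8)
The plan is to reduce the lemma to an analysis of the real-linear system that cuts out the CR singular set, and then read off both total reality and the dimension bound directly from the nondegeneracy of $A$. By the computation carried out just before the lemma, the CR singular set of $M$ equals $\{(z,0) : \bar\partial Q(z,\bar z) = 0\}$, where $Q = A + B + \overline{B}$, so it suffices to understand the set $S = \{z \in \C^n : \bar\partial Q = 0\}$ and then include it into $\{w=0\}$.

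First I would write $\bar\partial Q = 0$ out explicitly. Identifying $A = [a_{jk}]$ with its Hermitian matrix and $B = [b_{jk}]$ with its symmetric matrix, a direct computation of $\partial Q/\partial \bar z_m$ (using $A^T = \bar A$ from Hermiticity and $b_{jk}=b_{kj}$ from symmetry, which produces the factor of $2$) shows that the condition $\bar\partial Q = 0$ is equivalent to the single vector equation
\begin{equation}
\bar A z + 2 \bar B \bar z = 0 . \tag{$\ast$}
\end{equation}
I would sanity-check $(\ast)$ against the explicit type \eqref{eq:typeP}--\eqref{eq:typeM.III} computations already done. Since the map $z \mapsto \bar A z + 2\bar B \bar z$ is $\R$-linear but not $\C$-linear (because of the $\bar z$), the solution set $S$ is a real-linear subspace of $\C^n$, and hence the CR singular set $S \times \{0\}$ is a real-linear submanifold of $\{w=0\}$. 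This settles the ``linear submanifold'' part at once.

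The main point, and the only place where nondegeneracy of $A$ is used, is total reality. Suppose $z \in S$ and $iz \in S$. Substituting $iz$ into $(\ast)$ and using $\overline{iz} = -i\bar z$ gives $i\bar A z - 2i\bar B \bar z = 0$, that is $\bar A z - 2\bar B \bar z = 0$; adding this to $(\ast)$ yields $2\bar A z = 0$. Since $A$ is nondegenerate, $\bar A$ is invertible, so $z = 0$. Thus $S \cap iS = \{0\}$, which is precisely the statement that $S$ (and therefore $S \times \{0\} \subset \C^{n+1}$) is totally real.

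Finally, the dimension bound is automatic: for a totally real real-subspace one has $S \oplus iS \subseteq \C^n \cong \R^{2n}$, so $2\dim_{\R} S \le 2n$ and hence $\dim_{\R} S \le n$. I do not expect a genuine obstacle here; the only delicate bookkeeping is the passage from $\bar\partial Q$ to the clean matrix form $(\ast)$ (keeping track of the factor of $2$ and the conjugate-transpose coming from $A$ Hermitian and $B$ symmetric), and the clean additive cancellation that isolates $\bar A z$ is exactly what makes the nondegeneracy hypothesis do all the work.
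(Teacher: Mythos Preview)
Your proof is correct and follows essentially the same approach as the paper: both analyze the real-linear system $\bar\partial Q = 0$ and use nondegeneracy of $A$ to conclude total reality (and hence the dimension bound). The paper first diagonalizes $A$ to write the equations as $z_j = c_j \cdot \bar z$ and then asserts that ``the result follows,'' whereas you keep $A$ in matrix form and spell out the cancellation $S \cap iS = \{0\}$ explicitly; this is a cosmetic rather than substantive difference, and your version is in fact more complete on the point of total reality.
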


\begin{proof}
First diagonalize $A$ and write the submanifold as $w = Q(z,\bar{z})$.  The set
of
CR singularities in the $z$ space is given by $\bar{\partial} Q = 0$.
Since $A$ is diagonal we find that the set is given by $n$ equations of
the form $z_j = c_j \cdot \bar{z}$, where $c_j \in \C^n$.
Since $Q$ is real-valued this is simply the set of critical points
of $Q$ and as $Q$ is quadratic it is a subset of $\{ w = 0 \}$.
The result follows.
\end{proof}

We will say $M$ is \emph{completely parabolic} if the dimension of the set
of CR singularities of $M^{quad}$ is exactly $n$.  When $n=2$, this corresponds to types
\eqref{eq:typeP} and
\eqref{eq:typeM.I} with $\lambda_1 = \lambda_2 = \frac{1}{2}$.

In the next two sections we will prove the following lemma, which has to
be attacked by different techniques, depending on the type of
$M$.

\begin{lemma}\label{lem:PolyExtn}
Suppose $M \subset \C^{n+1}$, $n \geq 2$, given by
\begin{equation}
w = A(z,\bar{z}) + B(z,z) + \overline{B(z,z)},
\end{equation}
is a quadric holomorphically flat submanifold with
a nondegenerate CR singularity at the origin.
Suppose $f(z,\bar{z})$ is a polynomial such that when considered as a
function on $M$ (parametrized by $z$), $f$ is a CR function on
$M_{CR}$.

Then there exists a holomorphic polynomial $F(z,w)$ such that $f$ and $F$
agree on $M$, that~is,
\begin{equation}
f(z,\bar{z}) = F\bigl(z, A(z,\bar{z}) + B(z,z) + \overline{B(z,z)} \bigr) .
\end{equation}
Furthermore, if $f$ is homogeneous of degree $d$, then $F$ is weighted homogeneous of degree $d$, that is,
\begin{equation}
F(z,w)=\sum\limits_{j+2k=d}\, P_j(z)w^k
\end{equation}
where $P_j$ is a homogeneous polynomial of degree $j$.
\end{lemma}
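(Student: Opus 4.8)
The plan is to first translate the CR condition into an algebraic identity on the whole $z$-space. Writing $M$ as $w = Q(z,\bar z)$ with $Q = A + B + \overline{B}$ real-valued, I would compute that a $(0,1)$ vector field $L = \sum_j a_j \partial_{\bar z_j} + b\,\partial_{\bar w}$ is tangent to $M$ exactly when $b = 0$ and $\sum_j a_j Q_{\bar z_j} = 0$. Hence $f$ being CR on $M_{CR}$ means $\sum_j a_j f_{\bar z_j} = 0$ whenever $\sum_j a_j Q_{\bar z_j} = 0$; equivalently, the covectors $\bar\partial_z f$ and $\bar\partial_z Q$ are proportional on $M_{CR}$. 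By Lemma~\ref{lem:singset} the CR singular set is a totally real linear subspace of real dimension at most $n$, hence nowhere dense in $\C^n$, so the proportionality upgrades to the global polynomial identities
\begin{equation}
f_{\bar z_j}\, Q_{\bar z_k} = f_{\bar z_k}\, Q_{\bar z_j}, \qquad 1 \leq j,k \leq n,
\end{equation}
holding identically in $\C[z,\bar z]$.

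Next I would reduce to homogeneous $f$. Since $Q$ is homogeneous of degree two, $Q_{\bar z_k}$ is homogeneous of degree one, so collecting the degree-$d$ part of the identity above shows that each homogeneous summand $f_d$ of $f = \sum_d f_d$ satisfies the same identities and is therefore itself CR on $M_{CR}$. It thus suffices to produce, for homogeneous $f$ of degree $d$, a weighted-homogeneous $F$ of weighted degree $d$ (assigning $z_j$ weight one and $w$ weight two); summing the resulting $F$'s recovers the general statement, and the weighted homogeneity asserted in the lemma falls out of a construction that matches degrees at each step. Concretely, the goal becomes to integrate the overdetermined system $f_{\bar z_j} = \mu\, Q_{\bar z_j}$, where $\mu$ is forced to play the role of $F_w(z,Q)$, into an identity $f = F(z,Q)$ with $F$ a weighted-homogeneous polynomial.

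For the construction I would induct on the dimension $n$ together with the degree $d$, using the quadratic normal forms. The base cases are the low-dimensional models, the $\C^2$ Bishop form and the $\C^3$ forms \eqref{eq:typeP}, \eqref{eq:typeM.I}, \eqref{eq:typeM.II}, \eqref{eq:typeM.III} of Coffman, where $\bar\partial Q$ is explicit and the system can be solved by peeling off the top $\bar z$-degree of $f$ and matching it against the $\bar z$-degree-two part $\overline{B}$ of $Q$. To drop the dimension I would isolate an \emph{elliptic direction}, a coordinate entering $A$ with a definite sign and decoupled in $B$, and use it to strip one variable, modeled on the sphere $\{\norm{z}^2 = t\}$, where a polynomial CR function is forced into the form $\sum_k P_k(z)\norm{z}^{2k}$; this leaves a quadric in fewer variables to which the inductive hypothesis applies. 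Completely parabolic and mixed-signature configurations would be pushed down to the irreducible $\C^3$ cores.

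The main obstacle is the coupled, non-diagonalizable cases, above all \eqref{eq:typeM.II} and \eqref{eq:typeM.III}, where $B$ mixes the variables so that $\bar\partial Q$ does not split and no clean elliptic direction is available to peel off. There the passage from ``the $\bar z$-differentials of $f$ and $Q$ are proportional'' to ``$f$ is a polynomial in $Q$'' is genuinely delicate: one must control the indeterminacy of the multiplier $\mu$ precisely along the CR singular locus $\{\bar\partial Q = 0\}$ to guarantee that the integrated $F$ is a bona fide holomorphic polynomial rather than a merely formal or rational expression. I expect this coupling, together with the bookkeeping required to preserve weighted homogeneity through each dimension reduction, to be where the real work lies.
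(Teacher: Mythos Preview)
Your route is genuinely different from the paper's. The paper never tries to integrate the system $f_{\bar z_j}=\mu\,Q_{\bar z_j}$; it splits into two unrelated methods. When $M$ has an elliptic direction and is not completely parabolic, it attaches a continuous family of analytic discs to $M$ along that direction, extends $f$ holomorphically across them by the Kontinuit\"atssatz (Lemma~\ref{lem:ReaAnaExtnEllDir}), and then shows the extension is a polynomial by restricting to one-variable Bishop slices $M_{c'}$ and invoking a lemma from \cite{LNR} (Lemma~\ref{lem:PolyExtnEllDir}). The residual cases---$n=2$ of type~\eqref{eq:typeM.I} with $\lambda_1=\lambda_2\ge\tfrac12$, and the completely parabolic models for $n>2$---are handled instead by a direct dimension count: Proposition~\ref{prop:modeldiagext} bounds $\dim CR^d(M)$ by $\lfloor(d+2)^2/4\rfloor$, which equals the number of weighted-homogeneous monomials in $(z,w)$, so those monomials must already span.

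Your plan also inverts the case analysis. You flag \eqref{eq:typeM.II} and \eqref{eq:typeM.III} as the hard, coupled models, but by Lemma~\ref{lem:EllDir} both admit elliptic directions and are dispatched by the disc argument; the model with \emph{no} elliptic direction---and hence the one forcing a change of method in the paper---is \eqref{eq:typeM.I} with $\lambda_1=\lambda_2\ge\tfrac12$. More to the point, your algebraic route needs no normal forms, no dimension induction, and no case split at all. After complexifying, the $Q_{\bar z_j}$ are $n$ linearly independent linear forms (their $z$-coefficient block is the nondegenerate matrix $A$), hence pairwise non-associate irreducibles in $\C[z,\bar z]$. The identities $f_{\bar z_j}Q_{\bar z_k}=f_{\bar z_k}Q_{\bar z_j}$ then force $Q_{\bar z_j}\mid f_{\bar z_j}$ with a single polynomial quotient $\mu$; differentiating once more shows $\mu$ satisfies the same identities, so by induction on degree $\mu=G(z,Q)$ for some polynomial $G$. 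Choosing $H$ with $H_w=G$ makes $f-H(z,Q)$ a polynomial $P(z)$ in $z$ alone, and $F=H+P$ is the desired extension, automatically weighted-homogeneous when $f$ is homogeneous. The indeterminacy of $\mu$ along the CR singular locus that you anticipate as the main obstacle never arises.
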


Once the extension exists, the furthermore part of the lemma follows at
once.


\section{Extending along an elliptic direction in the model case}
\label{section:ellipticdir}

In this section we handle the cases when $M$ is not completely parabolic and
is elliptic in some direction, which is the generic case.
Suppose $M \subset \C^{n+1}$, $n \geq 2$, is given by $w = \rho(z,\bar{z})$ for
a real-valued $\rho$.
Let $c \in \C^n$, and define $M_c \subset \C^2$ to be given in the coordinates
$(w,\xi) \in \C^2$ by
\begin{equation} \label{eq:mfldeq}
w = \rho(c \xi, \overline{ c \xi } ) .
\end{equation}
We say $c$ is an \emph{elliptic direction} of $M$, if $M_c$ is an elliptic
submanifold
according to its Bishop invariant.  An important feature of an elliptic
Bishop surface is that it admits a family of attached analytic discs.

\begin{lemma}\label{lem:EllDir}
Suppose $M \subset \C^{n+1}$, $n\ge 2$, is a holomorphically flat submanifold
with a nondegenerate CR singularity at the origin
given by $w = \rho(z,\bar{z})$. Then
$M$ has an elliptic direction
(that is, there exists a $c \in \C^n$ such that $M_c$ is elliptic)
if and only if
\begin{enumerate}[(i)]
\item $n \geq 3$, or
\item $n = 2$ and $M$ is \emph{not} of type \eqref{eq:typeM.I} with
$\lambda_1 = \lambda_2 \geq \frac{1}{2}$.
\end{enumerate}
\end{lemma}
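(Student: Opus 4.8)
The plan is to reduce ellipticity of the slice $M_c$ to an explicit inequality in $A$ and $B$, and then to analyze that inequality using the diagonalization of $A$ together with Coffman's normal form in the $n=2$ case. First I would substitute $z = c\xi$ into \eqref{eq:mfldeq}. Writing $\rho = A + B + \overline{B} + E$ with $A$ Hermitian, $B$ a holomorphic quadratic form, and $E = O(3)$, the restriction becomes
\[
w = A(c,\bar c)\,\abs{\xi}^2 + B(c,c)\,\xi^2 + \overline{B(c,c)}\,\bar\xi^2 + O(3) ,
\]
so $M_c$ is a Bishop surface with real Hermitian coefficient $a = A(c,\bar c)$ and holomorphic quadratic coefficient $b = B(c,c)$. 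If $a = 0$ the CR singularity of $M_c$ is degenerate and $M_c$ is not elliptic; if $a \neq 0$, then after rescaling $w$ (replacing $w$ by $-w$ when $a<0$) and rotating $\xi$ to make the coefficient of $\xi^2$ real and nonnegative, the Bishop invariant of $M_c$ equals $\abs{b}/\abs{a} = \abs{B(c,c)}/\abs{A(c,\bar c)}$, which depends only on the $2$-jet so that $E$ plays no role. Hence $c$ is an elliptic direction if and only if $2\abs{B(c,c)} < \abs{A(c,\bar c)}$, and the lemma becomes the claim that such a $c$ exists precisely when (i) or (ii) holds.

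For (i), diagonalize $A$ with $\ell \geq n/2$ positive entries. When $n \geq 3$ the bound $\ell \ge n/2$ forces $\ell \geq 2$, so the positive eigenspace $P$ has complex dimension at least $2$; the holomorphic quadratic form $B$ restricted to $P$, being a quadratic form in at least two complex variables, has a nonzero isotropic vector $v$. Then $B(v,v)=0$ while $A(v,\bar v) > 0$, so the Bishop invariant of $M_v$ is $0$ and $v$ is an elliptic direction. The same argument applied to the whole space covers the positive-definite $n=2$ case, type \eqref{eq:typeP}.

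It remains to treat the indefinite $n=2$ cases, where $A(c,\bar c) = \abs{c_1}^2 - \abs{c_2}^2$, going through Coffman's forms. For \eqref{eq:typeM.II} the vector $(1,0)$ gives $B=0$ and $A = 1 \neq 0$. For \eqref{eq:typeM.III} one computes $B(c,c) = \tfrac12 (c_1+c_2)^2$, whose zero locus $c_2 = -c_1$ lies in the null cone of $A$, so I cannot force the invariant to vanish; instead I would take $c=(1,-\epsilon)$ with small $\epsilon>0$ and check $\abs{c_1+c_2}^2 < \bigl|\,\abs{c_1}^2 - \abs{c_2}^2\,\bigr|$, giving invariant $<\tfrac12$. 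For \eqref{eq:typeM.I}, $B(c,c) = \lambda_1 c_1^2 + \lambda_2 c_2^2$: if $\lambda_1 < \lambda_2$ the vector $(\sqrt{\lambda_2/\lambda_1},\,i)$ (or $(1,0)$ when $\lambda_1 = 0$) makes $B(c,c)=0$ with $A(c,\bar c) \neq 0$, while if $\lambda_1 = \lambda_2 = \lambda < \tfrac12$ the vector $(1,0)$ gives invariant $\lambda < \tfrac12$. Each choice yields an elliptic direction, so every indefinite $n=2$ type except \eqref{eq:typeM.I} with $\lambda_1 = \lambda_2 \geq \tfrac12$ has one.

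The main obstacle is the converse for that single excluded family, type \eqref{eq:typeM.I} with $\lambda_1 = \lambda_2 = \lambda \geq \tfrac12$, where I must rule out any elliptic direction. Here $B(c,c) = \lambda(c_1^2 + c_2^2)$, and everything hinges on the sharp inequality
\[
\abs{c_1^2 + c_2^2} \geq \bigl|\,\abs{c_1}^2 - \abs{c_2}^2\,\bigr| \qquad \text{for all } c \in \C^2 ,
\]
which I would establish by writing $c_j = r_j e^{i\theta_j}$ and computing $\abs{c_1^2 + c_2^2}^2 - (r_1^2 - r_2^2)^2 = 2 r_1^2 r_2^2 \bigl( 1 + \cos 2(\theta_1 - \theta_2) \bigr) \geq 0$. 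Since $2\lambda \geq 1$, this gives $2\abs{B(c,c)} = 2\lambda\abs{c_1^2 + c_2^2} \geq \bigl|\,\abs{c_1}^2 - \abs{c_2}^2\,\bigr|$ whenever $A(c,\bar c) \neq 0$, so the Bishop invariant of $M_c$ is always at least $\tfrac12$ and no direction is elliptic. Recognizing that this inequality is exactly what isolates the family $\lambda_1 = \lambda_2 \geq \tfrac12$ is the conceptual heart of the argument.
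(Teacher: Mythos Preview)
Your argument is correct and follows the same strategy as the paper: express the Bishop invariant of $M_c$ as $\abs{B(c,c)}/\abs{A(c,\bar c)}$, locate isotropic vectors of $B$ inside a definite subspace of $A$ to handle $n\ge 3$ and type \eqref{eq:typeP}, and then establish the sharp inequality $\abs{c_1^2+c_2^2}\ge \bigl|\,\abs{c_1}^2-\abs{c_2}^2\,\bigr|$ to rule out elliptic directions in the excluded \eqref{eq:typeM.I} family. Your treatment of type \eqref{eq:typeM.III} via the perturbed vector $c=(1,-\epsilon)$ is in fact more careful than the paper's choice $c=(1,0)$, which yields Bishop invariant exactly $\tfrac12$ (parabolic rather than elliptic) there.
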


\begin{proof}
First, write $M$ as
\begin{equation}
w = \sum_{j=1}^{\ell} \abs{z_j}^2 -
\sum_{j=\ell+1}^{n} \abs{z_j}^2
+ B(z,\bar{z})
+
E(z,\bar{z}) .
\end{equation}
By a simple linear change of coordinates we ensure that $\ell \geq
n-\ell$, that is, there are more positive than negative eigenvalues in $A$.

If $n \geq 3$, then $\ell \geq 2$.  Therefore if we set $z_3 = \cdots = z_n
= 0$, we obtain a submanifold $\widetilde{M} \subset \C^3$ of
type \eqref{eq:typeP}.  We will show below that such a submanifold always
has an elliptic direction and therefore $M$ has an elliptic direction.

Therefore, suppose that $n=2$.  Let us first dispose with the easy cases.
If \eqref{eq:typeM.I} and $\lambda_1 < \frac{1}{2}$,
then picking $c = (1,0)$ (that is, setting $z_2 = 0$) is
sufficient.  Similarly (same $c$) for cases
\eqref{eq:typeM.II} and
\eqref{eq:typeM.III}.

Now consider \eqref{eq:typeM.I} with $\lambda_1 \geq \frac{1}{2}$.  For
$c \in \C^2$, without loss of generality, assume
$c=(1,a)$ for some $a \in \C$.
Suppose $M$ is given by
\begin{equation}
w = \abs{z_1}^2 - \abs{z_2}^2
+ \lambda_1 ( z_1^2 + \bar{z}_1^2 )
+ \lambda_2 ( z_2^2 + \bar{z}_2^2 )
+ E(z,\bar{z}) .
\end{equation}
Then setting $z = c \xi$ we find that $M_c$ is given by
\begin{equation}
w = (1-\abs{a}^2)\abs{\xi}^2
+ ( \lambda_1  + \lambda_2 a^2) \xi^2
+ ( \lambda_1  + \lambda_2 \bar{a}^2) \bar{\xi}^2
+ E(c \xi,\overline{c \xi}) .
\end{equation}
Therefore the Bishop invariant is
\begin{equation}
\lambda_1
\abs{
\frac{
1  + \frac{\lambda_2}{\lambda_1} a^2
}{
1  - \abs{a}^2
}
} .
\end{equation}
The only way that this will be less than a half is if the second term is
less than 1.  If $\lambda_2 \not= \lambda_1$ we find an $a$ with $\abs{a} \not=1$
that makes the numerator vanish, and therefore we find an elliptic
direction.  So suppose $\lambda_2 = \lambda_1$.  In that case
we find that
\begin{equation}
\abs{
\frac{
1  + a^2
}{
1  - \abs{a}^2
}
} \geq 1 .
\end{equation}
Therefore the Bishop invariant of $M_c$ is always bigger than or equal
to $\lambda_1 = \lambda_2$.  So $M_c$ is elliptic if and only if $\lambda_1 <
\frac{1}{2}$ in this case.

What is left to show is the case \eqref{eq:typeP}.  Take $c \in \C^2$
a unit vector and suppose $M$ is given by
\begin{equation}
w = \abs{z_1}^2 + \abs{z_2}^2
+ \lambda_1 ( z_1^2 + \bar{z}_1^2 )
+ \lambda_2 ( z_2^2 + \bar{z}_2^2 )
+ E(z,\bar{z}) .
\end{equation}
Then setting $z = c \xi$ we find that $M_c$ is given by
\begin{equation}
w = \abs{\xi}^2
+ ( \lambda_1 c_1^2 + \lambda_2 c_2^2) \xi^2
+ ( \lambda_1 \bar{c}_1^2 + \lambda_2 \bar{c}_2^2) \bar{\xi}^2
+ E(c \xi,\overline{c \xi}) .
\end{equation}
The polynomial $P(c_1,c_2) = \lambda_1 c_1^2 + \lambda_2 c_2^2$ must have
a zero on the unit sphere as it has a zero at the origin, and therefore
there exists a $c$ such that $M_c$ is given by
\begin{equation}
w = \abs{\xi}^2 + E(c \xi,\overline{c \xi}) ,
\end{equation}
and therefore is elliptic.
\end{proof}

Let us now focus on the quadric model.
Suppose $M \subset \C^{n+1}$ is given by
\begin{equation}
w = A(z,\bar{z}) + B(z,z) + \overline{B(z,z)}.
\end{equation}
Suppose that $c \in \C^n$ is an elliptic direction.
Pick a real nonzero $w_0$ such that there exists some point $(c \xi,w_0)$
on $M$.  As $w_0$ is nonzero and $M_c$ is elliptic, this means that the
intersection of $M_c$ with $\{ w = w_0 \}$ is an ellipse.
Therefore the map
\begin{equation}
\xi \mapsto (w_0,c \xi)
\end{equation}
induces an analytic disc $\Delta_{c,w_0} \subset \C^{n+1}$ attached to $M$,
that is the boundary $\partial \Delta_{c,w_0} \subset M$.  Furthermore
since the CR singularities are a subset of $\{ w = 0 \}$ by
Lemma~\ref{lem:singset},
then the boundary of the disc is inside the set of CR points of $M$.

As $M_c$ is elliptic, the following can be done either for all $w_0 > 0$
or for all $w_0 < 0$, depending on the sign of the coefficient of $A(c \xi,
\overline{c \xi}) = A(c,\bar{c})\sabs{\xi}^2$.

\begin{lemma}\label{lem:ReaAnaExtnEllDir}
Suppose $M \subset \C^{n+1}$, $n \geq 2$, given by
\begin{equation}
w = A(z,\bar{z}) + B(z,z) + \overline{B(z,z)},
\end{equation}
is a quadric holomorphically flat submanifold with
a nondegenerate
CR
singularity that is not completely parabolic,
and an elliptic direction $c \in \C^n$, and let $w_0$ be
such that $\Delta_{c,w_0}$ is a closed analytic disc attached to $M$.

Suppose $f \colon M \to \C$ is a real-analytic function that is a CR function
on $M_{CR}$.

Then there exists a neighborhood $U \subset \C^{n+1}$ of $\Delta_{c,w_0}$
and a holomorphic function $F \colon U \to \C$ such that $F|_{U \cap M} =
f|_{U \cap M}$.
\end{lemma}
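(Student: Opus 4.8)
The plan is to first extend $f$ holomorphically to a neighborhood of the \emph{boundary} $\partial\Delta_{c,w_0}$, and then to ``fill in'' the disc by a continuity argument driven by the elliptic family of attached discs. I begin with the geometry. Since $M$ is holomorphically flat we have $M\subset \C^n\times\R=\{\Im w=0\}$, and because $w\equiv w_0\in\R$ on $\Delta_{c,w_0}$, the whole disc lies in the complex leaf $L_{w_0}=\C^n\times\{w_0\}$ of the Levi-flat hypersurface $\C^n\times\R$. By Lemma~\ref{lem:singset} the CR singular set lies in $\{w=0\}$, and $w_0\neq 0$, so $\partial\Delta_{c,w_0}\subset M_{CR}$. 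Because $c$ is an elliptic direction, there is in fact a one–parameter family of attached discs $\Delta_{c,t}$ (for $t$ of the sign of $A(c,\bar c)$), each with $\partial\Delta_{c,t}\subset M_{CR}$, and as $t\to 0$ the disc $\Delta_{c,t}$ shrinks to the CR singularity at the origin. This family is the engine of the proof.

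The second step produces the extension near $M_{CR}$. At CR points $M$ is a generic real-analytic CR submanifold of $\C^{n+1}$ of CR dimension $n-1$: in the coordinates $w=\rho(z,\bar z)$ the $(0,1)$ tangent fields are the $L=\sum_j a_j\,\partial_{\bar z_j}$ with $\sum_j a_j\,\partial_{\bar z_j}\rho=0$, and the CR hypothesis on $f$ says exactly that the vector $(\partial_{\bar z_1}f,\dots,\partial_{\bar z_n}f)$ is pointwise proportional to $(\partial_{\bar z_1}\rho,\dots,\partial_{\bar z_n}\rho)$. Complexifying $\bar z$ to an independent variable yields the complexification $\mathcal M$, whose projection to the $z$-space is a submersion along whose fibres the CR condition forces $f$ to be constant; hence $f$ descends to a holomorphic $\hat F$ on a full neighborhood $V$ of each point of $M_{CR}$ in $\C^{n+1}$ with $\hat F=f$ there, and by the identity theorem these patch to a single $\hat F\in\sO(V)$.

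The heart of the matter is to push $\hat F$ across the disc, and here I would invoke the continuity principle (Kontinuit\"atssatz). All the boundaries $\partial\Delta_{c,t}$ lie in $M_{CR}\subset V$, so it suffices to know that for some small $t_0$ the \emph{entire} closed disc $\Delta_{c,t_0}$ already lies in $V$; the continuity principle then propagates holomorphy and extends $\hat F$ to a neighborhood $U$ in $\C^{n+1}$ of $\bigcup_{t_0\le t\le w_0+\epsilon}\Delta_{c,t}$, in particular of $\overline{\Delta_{c,w_0}}$. This anchoring step is where I expect the real difficulty to lie: the interior of $\Delta_{c,t}$ sits off $M$ by an amount $O(t)$, whereas the thickness of the collar $V$ degenerates as one approaches the CR singularity, so one must verify that $V$ is thick enough near the shrinking discs. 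Ellipticity is exactly what makes this work — it supplies a genuine cluster of small attached discs at the singularity and a correspondingly ``fat'' domain of extension — and the anisotropic scaling $(z,w)\mapsto(\lambda z,\lambda^2 w)$ preserving the quadric is the natural device for making the comparison uniform in $t$. In the positive definite case \eqref{eq:typeP} this is transparent: each leaf slice $M\cap L_t$ is a compact real hypersurface bounding a bounded domain of $\C^n$ that contains $\Delta_{c,t}$, so Hartogs--Bochner \emph{inside the leaf} already crosses the disc; the mixed-signature cases are the ones requiring the scaling comparison.

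Finally, $F:=\hat F|_U$ is holomorphic on the neighborhood $U$ of $\overline{\Delta_{c,w_0}}$, and it agrees with $f$ on $M_{CR}\cap U$, which is dense in $M\cap U$; hence by continuity and real-analyticity $F|_{U\cap M}=f|_{U\cap M}$, as required.
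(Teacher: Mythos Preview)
Your overall architecture is right---extend $f$ holomorphically to an open $V\supset M_{CR}$ and then push across the disc by the Kontinuit\"atssatz---and this is exactly the paper's strategy.  The gap is precisely where you flag it: the anchoring.  Your family $\Delta_{c,t}$ collapses as $t\to 0$ to the origin, which is the CR singularity and hence \emph{not} in $V$; there is no a~priori reason any whole disc $\Delta_{c,t}$ lies in $V$.  The scaling $(z,w)\mapsto(\lambda z,\lambda^{2}w)$ preserves $M$ but not $V$, since $V$ is determined by the particular $f$ and not by the geometry of $M$ alone; so scaling gives no uniform lower bound on the thickness of $V$ near~$0$.  Your Hartogs--Bochner-in-the-leaf idea only applies when the level set $\{Q=t\}\subset\C^{n}$ is compact, which even for type~\eqref{eq:typeP} requires all Bishop invariants strictly less than $\tfrac{1}{2}$; it fails already for parabolic or hyperbolic directions in type~\eqref{eq:typeP}, and of course for the mixed-signature types.

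The missing idea, and the only place the hypothesis \emph{not completely parabolic} enters, is this: that hypothesis says the CR singular set has real dimension at most $n-1$ in the $z$-space, so a generic affine translate $\xi\mapsto c\xi+v$ of the elliptic complex line misses it entirely.  The translated Bishop surface $M_{c,v}$, given by $w=Q(c\xi+v,\overline{c\xi+v})$, is still elliptic for small $v$, and now its minimum (the vertex of the paraboloid) corresponds to a \emph{CR point} of $M$, hence lies in the open set $V$.  Sufficiently small attached discs near that minimum therefore sit entirely inside $V$, providing the anchor.  The Kontinuit\"atssatz then runs in two moves: first grow $w$ from near the minimum up to $w_{0}$ along the translated family of discs (all boundaries in $M_{CR}\subset V$), then slide $v$ back to $0$.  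This is how the paper obtains the anchoring, with no appeal to scaling.
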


That is, $f$ extends holomorphically to a neighborhood of $\Delta_{c,w_0}$.

\begin{proof}
First notice that since $f$ is real-analytic, then there exists a
neighborhood $V$ of $M_{CR}$ and a holomorphic function $G
\colon V \to \C$, such that $G|_{M_{CR}} = f$.

The proof will follow by constructing a continuous
family of analytic discs $\Delta_t$, all attached to $M$,
such that $\Delta_0$ is a small disc near the CR points such that $\Delta_0
\subset V$, and $\Delta_1 = \Delta_{c,w_0}$.
By the
Kontinuit\"atssatz (see, e.g., \cite{Shabat:book}*{page 189}),
we extend $G$ along this family to a neighborhood of $\Delta_1$.  Because
all the discs are attached to $M$, the extension always agrees with $f$ on
the boundaries.

We begin by considering the submanifold $M_{c,v} \subset \C^2$, for $v \in
\C^n$, given in the $(\xi,w)$ coordinates by
\begin{equation}
w = A(c\xi+v,\overline{c\xi+v}) + B(c\xi+v,c\xi+v) + \overline{B(c\xi+v,c\xi+v)}.
\end{equation}
Clearly $M_{c,0} = M_c$, which is elliptic.  Suppose without loss of
generality that $w_0 > 0$, and therefore $\Delta_{c,w_1}$ exists for all
$w_1 > 0$, but not for any $w_1 < 0$.

Since $\{ w = w_0 \} \cap M_c$ is an ellipse,
$\{ w = w_0 \} \cap M_{c,v}$ is also an ellipse for small $v$. There is also some $w_1 <
0$ such that $\{ w = w_1 \} \cap M_c$ is empty, and therefore we pick
$v$ small enough so that also
$\{ w = w_1 \} \cap M_{c,v}$ is empty.

The set $\{ w = w_0 \} \cap M_{c,v}$ is still a subset of the CR points of $M$.
As $M$ is not completely parabolic,
the set of CR singularities is a totally real linear submanifold of real dimension
at most $n-1$.  Therefore we also make sure $v$ is picked so that the set (the 2
dimensional complex plane)
$\{ (z,w) : z = c\xi+v \text{ for some $\xi \in \C$} \}$ does not
contain any CR singularities of $M$.  That is, no point of $M_{c,v}$ is a CR
singular point of $M$.

Since the defining equation of $M_{c,v}$ is a quadric this means that there
exists some real $w'$ such that
$\{ w = w_2 \} \cap M_{c,v}$ is empty if $w_2 < w'$, and such that
$\{ w = w_2 \} \cap M_{c,v}$ is an ellipse if $w_2 > w'$.  That is, we are
considering the level sets
of a real quadratic function defined on~$\C$:
\begin{equation}
\xi \mapsto A(c\xi+v,\overline{c\xi+v}) + B(c\xi+v,c\xi+v) +
\overline{B(c\xi+v,c\xi+v)} .
\end{equation}
Hence $\{ w = w' \} \cap M_{c,v}$ is a point.  This point
corresponds to a CR point of $M$.

If we pick $w_3 > w'$ very close to $w'$ we obtain an analytic disc
induced by the ellipse
$\{ w = w_3 \} \cap M_{c,v}$, which is completely in $V$.
We can now construct a continuous family of analytic discs attached to $M$ starting with
$\{ w = w_3 \} \cap M_{c,v}$ and ending with
$\{ w = w_0 \} \cap M_{c,v}$.

If we now move $v$ to $0$ we obtain analytic discs attached to $M$ starting
with $\{ w = w_0 \} \cap M_{c,v}$, and ending with the disc induced by
the ellipse $\{ w = w_0 \} \cap M_{c,0}$, which is just $\Delta_{c,w_0}$.

We finish by applying the Kontinuit\"atssatz as mentioned above.
\end{proof}

We now prove a polynomial version of the extension in the model case,
in the case when there is an elliptic direction.  The
case \eqref{eq:typeM.I} with $\lambda_1 \geq \frac{1}{2}$ and the completely
parabolic case will be covered by the next section.

\begin{lemma}\label{lem:PolyExtnEllDir}
Suppose $M \subset \C^{n+1}$, $n \geq 2$, given by
\begin{equation}
w = A(z,\bar{z}) + B(z,z) + \overline{B(z,z)},
\end{equation}
is a quadric holomorphically flat submanifold with
a nondegenerate
CR
singularity that is not completely parabolic, and an elliptic direction $c \in \C^n$.

Suppose $f(z,\bar{z})$ is a polynomial that, when considered as a
function on $M$ (parametrized by $z$), is a CR function on $M_{CR}$.

Then there exists a holomorphic polynomial $F(z,w)$ such that $f$ and $F$
agree on $M$, that~is,
\begin{equation}
f(z,\bar{z}) = F\bigl(z, A(z,\bar{z}) + B(z,z) + \overline{B(z,z)} \bigr) .
\end{equation}
\end{lemma}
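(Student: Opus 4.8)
The plan is to upgrade the real-analytic extension of Lemma~\ref{lem:ReaAnaExtnEllDir} into an algebraic identity, exploiting the fact that for a fixed value of $z$ the level sets of the quadric in the $\bar z$-variable are connected complex hypersurfaces once $n \geq 2$. Write $Q(z,\bar z) = A(z,\bar z)+B(z,z)+\overline{B(z,z)}$, so that $M = \{w = Q(z,\bar z)\}$ and, by reality of $Q$, the relation $w = \bar w = Q$ holds on $M$. Since the statement is linear in $f$, I may split $f$ into homogeneous pieces and assume $f$ homogeneous of degree $d$; this is convenient only for the final degree bookkeeping, after which the weighted-homogeneous shape of $F$ follows from the scaling $(z,w)\mapsto(tz,t^2w)$ exactly as recorded in Lemma~\ref{lem:PolyExtn}.

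First I would invoke Lemma~\ref{lem:ReaAnaExtnEllDir} at the disc $\Delta_{c,w_0}$ attached along the elliptic direction $c$. This produces an open set $U \subset \C^{n+1}$ meeting $M_{CR}$ and a holomorphic $F$ on $U$ with $F = f$ on $U\cap M$, so that on the open piece $U\cap M_{CR}$ we have $f(z,\bar z) = F\bigl(z,Q(z,\bar z)\bigr)$. Complexifying $\bar z$ to an independent variable $\zeta$, this becomes a holomorphic identity
\begin{equation*}
f(z,\zeta) = F\bigl(z,Q(z,\zeta)\bigr)
\end{equation*}
on an open set $W \subset \C^n\times\C^n$ whose projection to the $z$-factor is open and contains boundary points $z^0 = c\xi^0 \neq 0$ of the ellipse.

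Next I would fix a generic $z$ with $(z,\zeta)\in W$ for $\zeta$ in an open slice $W_z$, and study the map $\phi_z\colon \zeta \mapsto Q(z,\zeta)$ from $\C^n$ to $\C$. Because $A$ is nondegenerate and $z\neq 0$, the $\zeta$-gradient of $Q(z,\cdot)$ is not identically zero, so $\phi_z$ is nonconstant; moreover for generic $z$ it is not a constant times the square of a linear form (otherwise $A(z,\cdot)$ would be proportional to a fixed linear form for all $z$, contradicting $\rank A = n \geq 2$), hence $\phi_z$ is primitive and its generic fibers $\Phi_w = \phi_z^{-1}(w)$ are connected affine hypersurfaces in $\C^n$ (for $n\geq 2$ even a reducible quadric is connected, since its hyperplane components meet). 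For a value $w$ with $\Phi_w\cap W_z\neq\emptyset$, the displayed identity shows that the polynomial $\zeta\mapsto f(z,\zeta)-F(z,w)$ vanishes on the nonempty relatively open set $\Phi_w\cap W_z$, hence on all of the connected hypersurface $\Phi_w$. Thus $f(z,\cdot)$ is constant on generic fibers of $\phi_z$, and since $\phi_z$ is primitive this forces $f(z,\cdot)=h_z\circ\phi_z$ for a one-variable polynomial $h_z$ with $\deg h_z\le \tfrac12\deg_\zeta f$; matching $h_z$ with $F$ shows $F(z,\cdot)$ is a polynomial in $w$ of bounded degree.

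Finally I would assemble joint polynomiality. Writing $F(z,w)=\sum_k h_k(z)\,w^k$, the coefficients $h_k(z)$ are holomorphic on the $z$-projection of $W$ (Taylor coefficients in $w$), and comparing coefficients of monomials in $\zeta$ in the identity $f(z,\zeta)=\sum_k h_k(z)\,Q(z,\zeta)^k$ expresses each $h_k(z)$, via the triangular-by-degree linear system against the independent polynomials $Q(z,\cdot)^k$, as a polynomial in $z$; thus $F$ is a polynomial in $(z,w)$. The resulting identity $f(z,\zeta)=F\bigl(z,Q(z,\zeta)\bigr)$ then holds for $z$ in an open set and all $\zeta$, so being a polynomial identity it holds for all $(z,\zeta)$, and restricting $\zeta=\bar z$ gives $f(z,\bar z)=F\bigl(z,Q(z,\bar z)\bigr)$ on all of $M$. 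The main obstacle is the middle step—descending the purely local holomorphic extension near a single disc to a global algebraic identity with polynomial $F$—and the decisive input is the connectedness of the $\bar z$-level sets of $Q$, which is precisely where the hypothesis $n\geq 2$ is used.
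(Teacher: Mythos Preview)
Your approach is sound and genuinely different from the paper's.  The paper slices $M$ by complex lines $z=c'\xi$ for $c'$ in an open neighborhood $C$ of the elliptic direction, invokes a lemma from \cite{LNR} to produce a polynomial extension $F_{c'}(\xi,w)$ on each Bishop surface $M_{c'}$, and then matches these one-dimensional extensions against the holomorphic $F$ from Lemma~\ref{lem:ReaAnaExtnEllDir} to see that $F$ coincides with a polynomial.  You instead complexify $\bar z\mapsto\zeta$ and argue fibrewise in the $\zeta$-variable: for fixed generic $z$ the level sets of $\phi_z(\zeta)=Q(z,\zeta)$ are connected (here is where $n\ge 2$ enters), so the local identity $f(z,\zeta)=F(z,\phi_z(\zeta))$ propagates along each fiber, forcing $F(z,\cdot)$ to be a one-variable polynomial, and then you argue the $z$-coefficients are polynomial.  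Your route is more self-contained (no appeal to \cite{LNR}) and more algebro-geometric; the paper's route stays closer to the analytic-disc picture.

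Two places in your write-up deserve tightening.  First, for the step ``$f(z,\zeta)-F(z,w)$ vanishes on an open piece of $\Phi_w$, hence on all of the connected $\Phi_w$'' you really need \emph{irreducibility}, not mere connectedness: if $\Phi_w$ is two intersecting hyperplanes and $W_z$ meets only one, the polynomial vanishes only on that one.  The fix is easy---for generic $w$ the fiber is an irreducible quadric (or hyperplane), and the finitely many reducible fibers are handled by continuity; alternatively, restrict $\phi_z$ to an affine line on which it has degree one (such a line exists since $n\ge2$ gives $\bar B$ an isotropic direction) to see directly that $F(z,\cdot)$ is a polynomial.  Second, the claim that the ``triangular-by-degree'' system expresses each $h_k(z)$ as a \emph{polynomial} in $z$ is immediate only when $B\neq 0$, since then the leading $\zeta$-part of $Q(z,\zeta)^k$ is $\overline{B}(\zeta,\zeta)^k$, independent of $z$.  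When $B=0$ the leading part is $A(z,\zeta)^k$ and your system only yields $h_k$ rational in $z$; one must then exclude poles, for instance by noting that if some $h_k$ had a pole along $\{p(z)=0\}$ then clearing denominators in $f(z,\zeta)=\sum_k h_k(z)Q(z,\zeta)^k$ and restricting to $\{p=0\}$ forces all $p^m h_k$ to vanish there (using linear independence of the $Q(z,\cdot)^k$ for generic $z$ on the hypersurface), contradicting the choice of $m$.
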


\begin{proof}
Let us consider $M_c$.  Suppose without loss of generality that
$\Delta_{c,w_0}$ exists for all $w_0 > 0$.

If $c \in \C^n$ is an elliptic direction, then we notice that a whole
neighborhood $C \subset \C^n$ of $c$ are also elliptic directions for $M$.

Let $c' \in C$.  In \cite{LNR}*{Lemma 5.1}, we proved that a polynomial $P(\xi,\bar{\xi})$, when considered as
a function on $M_{c'}$ extends holomorphically to a polynomial in the
variables $\xi$ and $w$ as long as it extends along each analytic disc.
Since $f$ extends by the previous lemma to each $\Delta_{c',w_0}$ for $w_0 >
0$ and $c' \in C$ we can use this lemma.  Therefore we obtain a polynomial
$F_{c'}(\xi,w)$ that extends $f(c' \xi, \overline{c' \xi})$.

Now we look at $w_0 = 1$ and $c$.  There exists a holomorphic function $F$
defined in a neighborhood $U \subset \C^{n+1}$ of $\Delta_{c,1}$ that extends $f$.
Since for all $c' \in C$,
\begin{equation}
(\xi,w) \mapsto F(c'\xi, w)
\end{equation}
agrees with $F_{c'}(\xi,w)$ on some open set, and since $C$ is an open set,
we find that $F(z,w)$ agrees with a polynomial on an open set and we are
done.  See \cite{LNR}*{Proposition 5.2}.
\end{proof}


\section{Extending in the diagonalizable model case}
\label{section:diagonalizable}

There are two cases of model manifolds that we have not covered yet.
First, $n=2$ and $M$ is of type
\eqref{eq:typeM.I} with both $\lambda_j$ large,
that is, larger than $\frac{1}{2}$.  The second case is when $M$ is
a completely parabolic submanifold and $n > 2$.
We first focus on the extension in $n=2$, and then use this result to
solve the completely parabolic case when $n > 2$.

The following technique in $n=2$ works equally well for
\eqref{eq:typeP} and \eqref{eq:typeM.I} and both $\lambda_j > 0$, and
therefore we state the results for both types.
Let $M$ be given by \eqref{eq:typeP} or
\eqref{eq:typeM.I} with $E=0$.
That is,
\begin{align}
\label{eq:3dtypeP}
w&=\abs{z_1}^2+\abs{z_2}^2 + \lambda_1\left(z_1^2+\bar{z}_1^2\right) + \lambda_2\left(z_2^2+\bar{z}_2^2\right)
\intertext{or}
\label{eq:3dtypeM.I}
w&=\abs{z_1}^2-\abs{z_2}^2 + \lambda_1\left(z_1^2+\bar{z}_1^2\right) + \lambda_2\left(z_2^2+\bar{z}_2^2\right)
\end{align}
for some $0 < \lambda_1\leq\lambda_2$.  Note that we have already handled
the case when $\lambda_1 = 0$.

\begin{prop} \label{prop:modeldiagext}
Suppose $M\subset\C^2\times\C$, given by \eqref{eq:3dtypeP} or
\eqref{eq:3dtypeM.I}, $0 < \lambda_1 \leq \lambda_2$.

Let $CR^d(M)$ be the (complex) vector space of degree $d$ homogeneous
polynomials
$f(z,\bar{z})$, which when considered as
functions on $M$ (parametrized by $z$), are CR functions on
$M_{CR}$.

Then
\begin{equation}
\dim CR^d(M) \leq
\left\lfloor
\frac{(d+2)^2}{4}
\right\rfloor .
\end{equation}
\end{prop}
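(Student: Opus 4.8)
The plan is to reduce the CR condition on $f$ to a single linear PDE, and then bound the kernel of the resulting operator by a filtration argument that isolates the top antiholomorphic degree. Writing $M$ as $w = Q(z,\bar z)$ with $Q$ the real quadratic form in \eqref{eq:3dtypeP} or \eqref{eq:3dtypeM.I}, I first note that a $(0,1)$ vector field $a_1\partial_{\bar z_1}+a_2\partial_{\bar z_2}+b\partial_{\bar w}$ is tangent to $M$ exactly when $b=0$ and $a_1 Q_{\bar z_1}+a_2 Q_{\bar z_2}=0$; hence at CR points, where $(Q_{\bar z_1},Q_{\bar z_2})\neq 0$, the bundle $T^{0,1}M$ is spanned by the single field
\[
\bar L = Q_{\bar z_2}\,\partial_{\bar z_1} - Q_{\bar z_1}\,\partial_{\bar z_2}.
\]
Since $f=f(z,\bar z)$ involves no $w$, the requirement that $f$ be CR on $M_{CR}$ is precisely $\bar L f=0$ at each CR point. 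As $\bar L f$ is a polynomial and, by Lemma~\ref{lem:singset}, the CR singular set is totally real of real dimension at most $2$ (hence nowhere dense in $z$-space), the CR points are dense and so $\bar L f\equiv 0$ identically. Thus $CR^d(M)=\ker\bigl(\bar L|_{\mathcal{P}^d}\bigr)$, where $\mathcal{P}^d$ is the space of degree $d$ homogeneous polynomials in $(z,\bar z)$.

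Next I exploit the bidegree grading by splitting $\bar L=\bar L_1+\bar L_0$, where $\bar L_1=\epsilon_2 z_2\partial_{\bar z_1}-\epsilon_1 z_1\partial_{\bar z_2}$ shifts bidegree by $(+1,-1)$ and $\bar L_0=2\lambda_2\bar z_2\partial_{\bar z_1}-2\lambda_1\bar z_1\partial_{\bar z_2}$ preserves bidegree (here $\epsilon_1=1$ and $\epsilon_2=\pm1$ for \eqref{eq:3dtypeP}/\eqref{eq:3dtypeM.I}); note $\bar L_0$ differentiates only in the $\bar z$ variables. For $f=\sum_{q=0}^d f_q\in\ker\bar L$, with $f_q$ of bidegree $(d-q,q)$, collecting the antiholomorphic degree $q$ part gives $\bar L_0 f_q+\bar L_1 f_{q+1}=0$ for each $q$, with $f_{d+1}=0$. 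I then filter $S:=CR^d(M)$ by top antiholomorphic degree, $S_q=\{f\in S: f_{q'}=0\text{ for }q'>q\}$, so that $S_{-1}=0\subseteq S_0\subseteq\cdots\subseteq S_d=S$. For $f\in S_q$ the index-$q$ relation reads $\bar L_0 f_q=0$, so the leading-term map $\ell_q\colon S_q\to\ker(\bar L_0|_{V_q})$, $f\mapsto f_q$ (with $V_q$ the bidegree $(d-q,q)$ space), is well defined with kernel exactly $S_{q-1}$. Rank--nullity yields $\dim S_q\le\dim S_{q-1}+\dim\ker(\bar L_0|_{V_q})$, and summing gives $\dim CR^d(M)\le\sum_{q=0}^d\dim\ker(\bar L_0|_{V_q})$.

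The final step is the kernel count. Because $\bar L_0$ acts only on $\bar z$, it acts as $\mathrm{id}\otimes\bar L_0$ on $V_q\cong(\text{degree }d-q\text{ in }z)\otimes(\text{degree }q\text{ in }\bar z)$, so $\dim\ker(\bar L_0|_{V_q})=(d-q+1)\cdot\dim\ker\bigl(\bar L_0|_{\text{deg }q\text{ in }\bar z}\bigr)$. The form $g=\lambda_1\bar z_1^2+\lambda_2\bar z_2^2$ satisfies $\bar L_0 g=0$, and a short tridiagonal recursion on the monomial coefficients (using $\lambda_1,\lambda_2\neq 0$) shows that $\ker(\bar L_0)$ on degree $q$ antiholomorphic polynomials is spanned by $g^{q/2}$ when $q$ is even and is $0$ when $q$ is odd. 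Hence only even $q$ contribute, and
\[
\dim CR^d(M)\le\sum_{k=0}^{\lfloor d/2\rfloor}(d+1-2k)=\left\lfloor\frac{(d+2)^2}{4}\right\rfloor,
\]
as a direct computation confirms for both parities of $d$. The same argument covers \eqref{eq:3dtypeP} and \eqref{eq:3dtypeM.I} simultaneously, since the signs $\epsilon_j$ enter only $\bar L_1$ and never affect $\ker\bar L_0$.

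I expect the main obstacle to be organizational rather than a single hard estimate: the system $\bar L_0 f_q+\bar L_1 f_{q+1}=0$ is genuinely coupled through the off-diagonal operator $\bar L_1$, and a naive attempt to solve it term by term would have to control solvability at every stage. The key device is the filtration by top antiholomorphic degree, which makes $\bar L_1$ drop out entirely: on the top nonzero piece of each solution the term $\bar L_1 f_{q+1}$ vanishes, reducing everything to kernels of the single operator $\bar L_0$. The remaining care points are the density justification in the reduction $\bar L f\equiv 0$ and the exactness of the parity dichotomy for $\ker\bar L_0$, both of which are elementary once set up.
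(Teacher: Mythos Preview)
Your argument is correct and, at its core, coincides with the paper's: both reduce $\bar L f=0$ to the bidegree-preserving piece $\bar L_0$ and bound the kernel by the kernels of $\bar L_0$ on each bidegree. The paper packages this as an explicit block--lower-triangular matrix for $\bar L$ (with respect to a monomial ordering that groups by antiholomorphic degree), whose diagonal blocks are $(j{+}1)\times(j{+}1)$ matrices $A_{j+1}$ having nonzero entries only on the super- and sub-diagonals, and then lower-bounds the total rank by $\sum_j (d-j+1)\operatorname{rank}A_{j+1}$ via a direct rank lemma $\operatorname{rank}A_\ell=2\lfloor \ell/2\rfloor$. Your filtration by top antiholomorphic degree is exactly the coordinate-free version of that block-triangular reduction, and your tridiagonal recursion reproduces the same rank/nullity count. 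The one genuinely new ingredient in your write-up is the identification of $\ker\bar L_0$ on the degree-$q$ antiholomorphic polynomials with $\C\cdot(\lambda_1\bar z_1^2+\lambda_2\bar z_2^2)^{q/2}$ for even $q$: this explains \emph{why} the nullity is $1$ or $0$ rather than merely computing it, and it makes transparent that the argument is uniform in the sign $\epsilon_2$, since only $\bar L_0$ matters and $\bar L_0$ is independent of the signature of $A$.
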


We will see below that the inequality is in fact an equality.
Before we prove this, we need a small claim.

\begin{prop}
Let $A_\ell$ be an $\ell \times \ell$ matrix with zeros everywhere
except exactly on the super and sub diagonal.  That is, if $A_\ell = [a_{jk}]$
then $a_{jk} \not= 0$ if and only if $j=k+1$ or $j=k-1$.  Then
\begin{equation}
\rank A_\ell = 2 \left\lfloor \frac{\ell}{2} \right\rfloor
=
\begin{cases}
\ell & \text{if $\ell$ even,} \\
\ell-1 & \text{if $\ell$ odd.}
\end{cases}
\end{equation}
\end{prop}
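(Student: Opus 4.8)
The plan is to reduce the rank computation to the vanishing or non-vanishing of determinants, exploiting the fact that $A_\ell$ is tridiagonal with zero main diagonal and nonzero super- and sub-diagonal entries. Write $A_\ell = [a_{jk}]$ and set $D_\ell = \det A_\ell$, with the conventions $D_0 = 1$ and $D_1 = 0$ (the single diagonal entry is zero). Expanding $\det A_\ell$ by cofactors along the last row, whose only nonzero entry is $a_{\ell,\ell-1}$, and then expanding the resulting minor along its last column, whose only nonzero entry is $a_{\ell-1,\ell}$, yields the recurrence
\begin{equation}
D_\ell = -\, a_{\ell-1,\ell}\, a_{\ell,\ell-1}\, D_{\ell-2}.
\end{equation}
Since $a_{\ell-1,\ell}\,a_{\ell,\ell-1} \neq 0$ for every $\ell$, this recurrence together with $D_0 = 1$ and $D_1 = 0$ shows by induction that $D_\ell \neq 0$ when $\ell$ is even and $D_\ell = 0$ when $\ell$ is odd. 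Equivalently, a permutation $\sigma$ contributes a nonzero term to the Leibniz expansion only if $\abs{\sigma(j)-j} = 1$ for all $j$, and such a $\sigma$ corresponds to a perfect matching of the path on $\ell$ vertices, which exists precisely when $\ell$ is even and is then unique.

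With the determinants in hand the two parities are immediate. If $\ell$ is even, then $D_\ell \neq 0$, so $A_\ell$ is invertible and $\rank A_\ell = \ell = 2\lfloor \ell/2\rfloor$. If $\ell$ is odd, then $D_\ell = 0$ forces $\rank A_\ell \leq \ell - 1$, so for the reverse inequality I would exhibit an $(\ell-1)\times(\ell-1)$ nonsingular submatrix. Deleting the last row and last column of $A_\ell$ leaves the leading principal submatrix, which is again a matrix of the same form $A_{\ell-1}$, now of \emph{even} size $\ell-1$, hence nonsingular by the previous paragraph. Therefore $\rank A_\ell \geq \ell-1$, and combining the two bounds gives $\rank A_\ell = \ell - 1 = 2\lfloor \ell/2\rfloor$.

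There is no serious obstacle here; the only point requiring a little care is the odd case, where vanishing of the full determinant must be supplemented by a specific nonvanishing minor rather than inferred from the determinant alone. The leading $(\ell-1)$-square block is the convenient choice precisely because it again has the tridiagonal zero-diagonal form, so that the even-dimensional nonvanishing result applies verbatim and no new computation is needed.
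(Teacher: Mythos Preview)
Your proof is correct. The determinant recurrence $D_\ell = -a_{\ell-1,\ell}\,a_{\ell,\ell-1}\,D_{\ell-2}$ is valid, the base cases are right, and the leading $(\ell-1)\times(\ell-1)$ principal submatrix is indeed of the same form, so your odd-case minor argument goes through.

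The paper argues slightly differently: it verifies $\ell=1,2,3$ directly and then observes that a single row operation and a single column operation transform $A_\ell$ into $\bigl[\begin{smallmatrix}0&1\\1&0\end{smallmatrix}\bigr]\oplus A'_{\ell-2}$ with $A'_{\ell-2}$ of the same type, giving $\rank A_\ell = 2 + \rank A'_{\ell-2}$ and finishing by induction. Both arguments exploit the same two-step reduction, but the paper works directly with rank and handles both parities uniformly in one line, whereas your determinant route is forced to treat the odd case separately by exhibiting a nonsingular minor. The trade-off is that your cofactor computation yields the actual determinant (and not just its vanishing or nonvanishing), which is a little more information, while the paper's block reduction is marginally cleaner for the rank statement itself.
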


\begin{proof}
This is easily checked for $\ell = 1,2,3$.  Then after a row and a
column operation we find that $A_\ell$ becomes $\left[\begin{smallmatrix} 0 & 1\\ 1
& 0 \end{smallmatrix} \right] \oplus A'_{\ell-2}$, where $A'_{\ell-2}$ is an
$(\ell-2)$ by $(\ell-2)$ matrix of the same type.  The result follows.
\end{proof}

\begin{proof}[Proof of Proposition \ref{prop:modeldiagext}]
Let $Q(z,\bar{z}) = A(z,\bar{z}) + B(z,z) + \overline{B(z,z)}$.
Let $L$ be the CR vector field on $M$
\begin{equation}
L =
Q_{\bar{z}_2} \frac{\partial}{\partial \bar{z}_1} -
Q_{\bar{z}_1} \frac{\partial}{\partial \bar{z}_2}
=
(\epsilon z_2 + 2 \lambda_2 \bar{z}_2) \frac{\partial}{\partial \bar{z}_1} -
(z_1 + 2 \lambda_1 \bar{z}_1) \frac{\partial}{\partial \bar{z}_2}
\end{equation}
where $\epsilon = \pm 1$.  Let us apply $L$ to a monomial:
\begin{equation} \label{eq:Lapplied}
\begin{split}
L (
z_1^{a_1} z_2^{a_2}
\bar{z}_1^{b_1} \bar{z}_2^{b_2}
)
=
&
\epsilon
b_1
z_1^{a_1} z_2^{a_2+1}
\bar{z}_1^{b_1-1} \bar{z}_2^{b_2}
+
2 \lambda_2
b_1
z_1^{a_1} z_2^{a_2}
\bar{z}_1^{b_1-1} \bar{z}_2^{b_2+1}
\\
&
-
b_2
z_1^{a_1+1} z_2^{a_2}
\bar{z}_1^{b_1} \bar{z}_2^{b_2-1}
-
2 \lambda_1
b_2
z_1^{a_1} z_2^{a_2}
\bar{z}_1^{b_1+1} \bar{z}_2^{b_2-1} .
\end{split}
\end{equation}
A homogeneous polynomial $f$ is CR if and only if $Lf = 0$, which is a linear equation in the
coefficients of $f$.  Ordering the coefficients somehow, let $c$ be the
vector of coefficients of $f$.  The equation $Lf = 0$ can be written as
a matrix equation $Xc = 0$.  From the above equation we see that each column
has at most 4 nonzero entries.  Two of those entries correspond to
monomials with the same degree of $\bar{z}$ and $z$ and the other two
entries correspond to raising the degree of $z$ by one and lowering the degree
of $\bar{z}$ by one.

We order the monomials by an ordering that satisfies
$z_1^{a_1} z_2^{a_2}
\bar{z}_1^{b_1} \bar{z}_2^{b_2} <
z_1^{a_3} z_2^{a_4}
\bar{z}_1^{b_3} \bar{z}_2^{b_4}$
if $a_1+a_2 < a_3+a_4$ or if
$a_1+a_2 = a_3+a_4$ and
$a_1 < a_3$, or if
$a_1+a_2 = a_3+a_4$ and
$a_1 = a_3$ and
$b_1 < b_3$.  For example for $d=2$ we order the monomials as
\begin{equation}
\bar{z}_1^2 <
\bar{z}_1\bar{z}_2 <
\bar{z}_2^2 <
z_1 \bar{z}_1 <
z_1 \bar{z}_2 <
z_2 \bar{z}_1 <
z_2 \bar{z}_2 <
z_1^2 <
z_1z_2 <
z_2^2 .
\end{equation}
We ignore the part of $Lf$ that lowers the degree of $\bar{z}$.  That
operation moves ``to the right'' in the columns of $X$.
If we now decompose the matrix $X$ into blocks, for each fixed
monomial in $z$, we find blocks of the form $A_\ell$ as above,
with zeros to the right.

Again for example for second degree with the monomials ordered as above we
have the following matrix.  The blocks of the form $A_\ell$ are boxed.
The submanifold is of type \eqref{eq:typeM.I}.
\begin{equation}
\def\mcr#1{\multicolumn{1}{c|}{#1}}
\def\mcl#1{\multicolumn{1}{|c}{#1}}
\left[
\,
\begin{array}{cccccccccc}
\cline{1-3}
\mcl{0}          & -2 \lambda_1 & \mcr{0}            & 0           & 0            & 0           & 0            & 0 & 0 & 0 \\
\mcl{4\lambda_2} & 0            & \mcr{-4 \lambda_1} & 0           & 0            & 0           & 0            & 0 & 0 & 0 \\
\mcl{0}          & 2 \lambda_2  & \mcr{0}            & 0           & 0            & 0           & 0            & 0 & 0 & 0 \\
\cline{1-5}
0          & -1           & 0            & \mcl{0}           & \mcr{-2 \lambda_1} & 0           & 0            & 0 & 0 & 0 \\
0          & 0            & -2           & \mcl{2 \lambda_2} & \mcr{0}            & 0           & 0            & 0 & 0 & 0 \\
\cline{4-7}
-2         & 0            & 0            & 0           & 0            & \mcl{0}           & \mcr{-2 \lambda_1} & 0 & 0 & 0 \\
0          & -1           & 0            & 0           & 0            & \mcl{2 \lambda_2} & \mcr{0}            & 0 & 0 & 0 \\
\cline{6-7}
0          & 0            & 0            & 0           & -1           & 0           & 0            & 0 & 0 & 0 \\
0          & 0            & 0            & -1          & 0            & 0           & -1           & 0 & 0 & 0 \\
0          & 0            & 0            & 0           & 0            & -1          & 0            & 0 & 0 & 0 \\
\end{array}
\,
\right]
\end{equation}

For the monomials of a fixed total degree $j$ of $\bar{z}$, for $j \geq 1$,
we will find $(d-j+1)$ blocks of the form $A_{j+1}$ in the matrix.
That is for each fixed monomial $z^\alpha$, for $\abs{\alpha} = d-j < d$,
there is a block $A_{j+1}$ in the columns corresponding to
monomials $z^{\alpha} \bar{z}^{\beta}$ where $\abs{\beta} = j$.
There are always $(d-j+1)$ monomials of the form $z^\alpha$, and that is why
we get that many blocks.

For the example above, $d=2$, there is precisely one $A_3$ type block
for the monomials corresponding to monomials that are quadratic in
$\bar{z}$, that is, monomials of the form $\bar{z}^\beta$ where
$\abs{\beta} = 2$.  There are also two blocks, of type $A_2$.  One for the
monomials of the form $z_1 \bar{z}^\beta$ and one for the monomials
of the form $z_2 \bar{z}^\beta$, where $\abs{\beta} = 1$.

Therefore the rank of the matrix is at least
$\sum_{j=1}^d (d-j+1) 2 \left\lfloor \frac{j+1}{2} \right\rfloor$, and
\begin{equation}
\dim CR^d(M) \leq
\binom{d+3}{3} - \sum_{j=1}^d 2 (d-j+1)
\left\lfloor \frac{j+1}{2} \right\rfloor .
\end{equation}

It is not difficult to show that
\begin{equation}
\binom{d+3}{3} - \sum_{j=1}^d 2 (d-j+1)
\left\lfloor \frac{j+1}{2} \right\rfloor
=
\left\lfloor
\frac{(d+2)^2}{4}
\right\rfloor .
\end{equation}
\end{proof}

\begin{prop} \label{prop:modeldiagextpoly}
Suppose $M\subset\C^2\times\C$, given by \eqref{eq:3dtypeP} or
\eqref{eq:3dtypeM.I}, $0 < \lambda_1 \leq \lambda_2$.
Then the space $CR^d(M)$ is spanned by monomials in $z$ and
$w=A(z,\bar{z}) + B(z,z) + \overline{B(z,z)}$.

Consequently, suppose
$f(z,\bar{z})$ is a polynomial such that when considered as a
function on $M$ (parametrized by $z$), $f$ is a CR function on
$M_{CR}$.
Then there
exists a holomorphic polynomial $F(z,w)$ such that $f$ and $F$
agree on $M$, that~is,
\begin{equation}
f(z,\bar{z}) = F\bigl(z, A(z,\bar{z}) + B(z,z) + \overline{B(z,z)} \bigr) .
\end{equation}
\end{prop}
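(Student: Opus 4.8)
The plan is to exhibit an explicit spanning set of exactly the right size and then invoke the upper bound of Proposition~\ref{prop:modeldiagext} to force equality. The natural candidates are the monomials
\[
z_1^{a_1} z_2^{a_2} w^k = z_1^{a_1} z_2^{a_2}\, Q(z,\bar{z})^k,
\qquad a_1 + a_2 + 2k = d,
\]
where $Q = A + B + \overline{B}$. Each is the restriction to $M$ of the holomorphic polynomial $z_1^{a_1} z_2^{a_2} w^k$, hence is automatically CR on $M_{CR}$, and each is homogeneous of degree $d$; so all of them lie in $CR^d(M)$. First I would count them: for each $k$ with $0 \leq k \leq \lfloor d/2 \rfloor$ there are $d - 2k + 1$ choices of $(a_1,a_2)$, and a short computation gives
\[
\sum_{k=0}^{\lfloor d/2 \rfloor} (d-2k+1) = \left\lfloor \frac{(d+2)^2}{4} \right\rfloor ,
\]
which is exactly the bound from Proposition~\ref{prop:modeldiagext}.

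The crux is to show these monomials are linearly independent as functions on $M$, that is, as elements of $\C[z,\bar{z}]$ after substituting $w = Q$. Here I would sort the terms by antiholomorphic degree. Writing $Q = P + R + S$ with $P = \lambda_1 z_1^2 + \lambda_2 z_2^2$ of type $(2,0)$, $R$ of type $(1,1)$, and $S = \lambda_1 \bar{z}_1^2 + \lambda_2 \bar{z}_2^2$ of type $(0,2)$, the piece of $z^\alpha Q^k$ of maximal $\bar{z}$-degree is $z^\alpha S^k$, of $\bar{z}$-degree $2k$. Suppose $\sum_{\alpha,k} c_{\alpha,k} z^\alpha Q^k = 0$ nontrivially and let $K$ be the largest $k$ appearing. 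Extracting the component of $\bar{z}$-degree $2K$ kills all terms with $k < K$ and yields $\bigl(\sum_{\abs{\alpha}=d-2K} c_{\alpha,K} z^\alpha\bigr) S^K = 0$. Since $\lambda_1,\lambda_2 > 0$ we have $S \neq 0$, so $S^K \neq 0$; as $\C[z,\bar{z}]$ is an integral domain this forces $\sum_\alpha c_{\alpha,K} z^\alpha = 0$, hence all $c_{\alpha,K} = 0$, contradicting the choice of $K$. This is precisely where the hypothesis $\lambda_1 > 0$ enters, the case $\lambda_1 = 0$ having been treated already. The independent family therefore has size $\lfloor (d+2)^2/4 \rfloor \geq \dim CR^d(M)$, which combined with the reverse inequality of Proposition~\ref{prop:modeldiagext} shows the family is a basis; in particular $CR^d(M)$ is spanned by monomials in $z$ and $w$.

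For the final assertion I would split an arbitrary CR polynomial $f$ into its homogeneous parts $f = \sum_d f_d$. The CR condition $Lf = 0$ is a polynomial identity in $(z,\bar{z})$, and since the vector field $L$ preserves total degree, each $f_d$ is separately CR, i.e.\ $f_d \in CR^d(M)$. By the spanning result each $f_d$ equals $F_d(z,Q)$ for a weighted-homogeneous holomorphic polynomial $F_d$, and summing produces the desired $F(z,w)$ with $f = F(z,Q)$. I expect the only genuinely delicate point to be the linear independence; once the antiholomorphic-degree bookkeeping is set up, everything else is dimension counting together with the degree-preservation of $L$.
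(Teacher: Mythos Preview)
Your argument is correct and follows essentially the same dimension-counting strategy as the paper: exhibit the weighted-homogeneous monomials $z^\alpha w^k$, count them to get $\lfloor (d+2)^2/4\rfloor$, and invoke the upper bound of Proposition~\ref{prop:modeldiagext}. The one point where you go beyond the paper's version is the explicit linear-independence argument via the top $\bar z$-degree term $z^\alpha S^K$; the paper simply states the dimension of the space of weighted-homogeneous polynomials and asserts that this forces spanning, tacitly assuming the restriction map $F(z,w)\mapsto F(z,Q)$ is injective, which is exactly what your $S^K\neq 0$ argument verifies. Your handling of the general polynomial via degree-preservation of $L$ is the same as the paper's appeal to \eqref{eq:Lapplied}.
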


\begin{proof}
Weighted homogeneous polynomials in $z$ and $w$ when restricted to $M$ give a CR function, that is
elements of $CR^d(M)$.  Therefore the proof consists of showing that
the dimension of this set is the same as the dimension of $CR^d(M)$.

The dimension of the space weighted homogeneous polynomials of degree $d$ is
\begin{equation}
\sum\limits_{j+2k=d} (j+1)
=
\sum_{k=0}^{\lfloor d/2 \rfloor}
(d-2k+1)
=
\left\lfloor
\frac{(d+2)^2}{4}
\right\rfloor .
\end{equation}
Since the space $CR^d(M)$ cannot be any larger we find that the weighted
homogeneous polynomials in $z$ and $w$ will span the space.

If $f(z,\bar{z})$ is any polynomial that is CR on $M_{CR}$, then \eqref{eq:Lapplied}
implies that the homogeneous parts of $f$ are CR, and hence are in the span
of $z$ and $w = A(z,\bar{z}) + B(z,z) + \overline{B(z,z)}$.
\end{proof}

Now that we have covered all the $n=2$ cases, we can prove the
completely parabolic case in $n > 2$.

\begin{prop}
Suppose $M\subset\C^{n+1}$, $n> 2$, given by
$w=A(z,\bar{z}) + B(z,z) + \overline{B(z,z)}$ is completely parabolic (that is, the
real dimension of the set of CR singularities is exactly $n$).
Suppose $f(z,\bar{z})$ is a polynomial such that when considered as a
function on $M$ (parametrized by $z$), $f$ is a CR function on
$M_{CR}$.

Then there
exists a holomorphic polynomial $F(z,w)$ such that $f$ and $F$
agree on $M$, that~is,
\begin{equation}
f(z,\bar{z}) = F\bigl(z, A(z,\bar{z}) + B(z,z) + \overline{B(z,z)} \bigr) .
\end{equation}
\end{prop}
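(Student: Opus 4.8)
The plan is to reduce this completely parabolic case in $\C^{n+1}$ to the two‑dimensional situation already settled in Proposition~\ref{prop:modeldiagextpoly}. As in the proof of that proposition, equation~\eqref{eq:Lapplied} shows that a CR vector field preserves degree, so the homogeneous parts of a CR polynomial are again CR; hence it suffices to treat $f$ homogeneous of some degree $d$ and to show that $f$ lies in the span of the weighted‑homogeneous monomials in $z$ and $w = Q(z,\bar z)$.

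First I would record a clean normal form. Since $M$ is completely parabolic, the CR singular set $S$ of Lemma~\ref{lem:singset}, which is exactly the critical set (the real radical) of the real quadratic form $Q$, is totally real of real dimension $n$; thus $\C^n = S \oplus iS$ and a real basis of $S$ is a complex basis of $\C^n$. Applying the complex‑linear change sending that basis to the standard one, we may assume $S = \R^n$, so $Q$ vanishes to second order along $\R^n$ and therefore depends only on $\Im z$. Diagonalizing the resulting nondegenerate real form by a real orthogonal (hence complex‑linear) change and rescaling, $M$ becomes
\begin{equation}
w = \sum_{j=1}^{\ell} (\Im z_j)^2 - \sum_{j=\ell+1}^{n} (\Im z_j)^2 ,
\end{equation}
a ``sum of parabolic Bishop pieces.''

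Next I would slice. Fix indices $i<j$, freeze the remaining variables $z_k = a_k$ ($k\ne i,j$) with at least one $a_k\notin\R$ so that the affine $2$-plane misses $S$. On this slice $M$ restricts to $w = \pm(\Im z_i)^2 \pm(\Im z_j)^2 + c$ for a real constant $c$; after translating $w$ and, if needed, replacing $z_i$ by $iz_i$ and rescaling, this is a completely parabolic model of type~\eqref{eq:typeP} or~\eqref{eq:typeM.I} with $\lambda_1=\lambda_2=\tfrac12$. Because the slice avoids $S$, every CR point of the slice is a CR point of $M$, and the slice's CR vector field is the restriction of a $(0,1)$ vector field on $M$; hence $f$ restricted to the slice is CR on the slice's CR locus, and Proposition~\ref{prop:modeldiagextpoly} yields a holomorphic polynomial $F_{ij,a}(z_i,z_j,w)$ agreeing with $f$ there.

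I expect the main obstacle to be the gluing: assembling the slicewise polynomials into a single $F(z,w)$ with $F(z,Q)=f$. I would attack this by induction on $n$, the base case $n=2$ being Proposition~\ref{prop:modeldiagextpoly}. Freezing $z_n=a_n$ with $\Im a_n\ne 0$ turns $M$ into a completely parabolic model in one fewer variable, to which the inductive hypothesis applies, while the coordinate‑pair slices above control the dependence on $\bar z_n$ by pairing $z_n$ with each other variable. The extensions obtained on overlapping slices agree with $f$ on their common CR points and are polynomial, so, exactly as in the propagation argument \cite{LNR}*{Proposition 5.2} used in Lemma~\ref{lem:PolyExtnEllDir}, they must agree as polynomials on overlaps; patching these consistent pieces gives a polynomial in $z$ and $w$ restricting to $f$ on $M$. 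Equivalently one may phrase the target as the bound $\dim CR^d(M)\le \dim W^d$, where $W^d$ is the space of weighted‑homogeneous polynomials of degree $d$: since $W^d$ already injects into $CR^d(M)$ and the slicewise extensions show each element of $CR^d(M)$ is determined by data in the images of the two‑variable spaces, matching dimensions forces surjectivity. The delicate point is verifying that the slicewise data carries no residual obstruction to a global $F$, and this is precisely where the completely parabolic normal form and the uniqueness of the two‑variable extensions do the real work.
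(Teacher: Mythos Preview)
Your setup differs from the paper's. The paper slices \emph{linearly through the origin} via $z = L\xi$ with $L$ of the form $\left[\begin{smallmatrix} I_2 \\ \omega \end{smallmatrix}\right]$, applies Proposition~\ref{prop:modeldiagextpoly} once to get $F_L(\xi,w) = \sum c_{\alpha j}(\omega,\bar\omega)\,\xi^\alpha Q^j$, and then kills the $\bar\omega$-dependence of the $c_{\alpha j}$ by applying the CR vector field $X = \widetilde{Q}_{\bar\xi_2}\partial_{\bar\omega_2} - \widetilde{Q}_{\bar\omega_2}\partial_{\bar\xi_2}$, treating the slicing parameter $\omega$ as an extra variable. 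Substituting $\omega_2 = z_3/z_2$ back gives a rational extension with apparent poles on $\{z_2=0\}$; repeating with the roles of $\omega_1,\omega_2$ swapped moves the poles to $\{z_1=0\}$, so there are none. Your normal form $Q = \sum_j \epsilon_j(\Im z_j)^2$ is a clean simplification the paper does not make explicit, and your inductive affine-slicing framework is a reasonable alternative route.

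The genuine gap is exactly the step you yourself call ``delicate'': you never establish that the inductive extension $F_{a_n}(z',w)$ depends polynomially on $a_n$ alone and not on $\bar a_n$. The slices $\{z_n=a_n\}$ for distinct $a_n$ are disjoint, so there is no overlap across which \cite{LNR}*{Proposition~5.2} can propagate anything; saying the two-variable slices involving $z_n$ ``control'' the $\bar z_n$-dependence, and that one then ``patches consistent pieces,'' is an intuition rather than an argument. What is missing is a concrete mechanism. One fix entirely within your coordinates: restrict $F_{a_n}$ further to $\{z_k=a_k:k\ne i\}$ and compare with the two-variable extension $G_{i,a'}(z_i,z_n,w)$ on the common one-dimensional parabolic Bishop surface; uniqueness of holomorphic extension from a totally real $2$-manifold forces $F_{a_n}(a_1,\ldots,z_i,\ldots,a_{n-1},w)=G_{i,a'}(z_i,a_n,w)$, and since the right side is polynomial in $a_n$, varying $a'$ and using linear independence of the monomials $(a')^{\gamma}$ gives $\partial_{\bar a_n}c_\gamma=0$ for every coefficient of $F_{a_n}$. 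Alternatively, transplant the paper's device and apply a CR vector field $L_{in}$ mixing $\bar z_i$ with $\bar z_n$ directly to the identity $f=F_{a_n}(z',Q)$. Either route closes the gap; your proposal supplies neither.
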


\begin{proof}
Write $Q(z,\bar{z}) = A(z,\bar{z}) + B(z,z) + \overline{B(z,z)}$.
Let us choose a 2 complex dimensional plane through the origin in the $z$
variables.  That is, choose a linear $L \colon \C^2 \to \C^n$ and define
an $M_L \subset \C^2 \times \C$ by
\begin{equation}
w = Q(L\xi,\overline{L\xi})
\end{equation}
for variables $(\xi,w) \in \C^2 \times \C$.  Suppose that $M_L$ is
nondegenerate, which is true for an open dense set of $L$.

As the CR singular set of $M$ is a maximally totally real $n$-dimensional
plane in the $z$-space, $M_L$ is also going to be completely parabolic.

Take $L$ of the form
\begin{equation}
\left[\begin{array}{ccc}
1&0\\
0&1\\
\omega_1 & \omega_2
\end{array}\right]
\end{equation}
for some column vectors $\omega_j\in\C^{n-2}$.
We apply the polynomial result, Proposition~\ref{prop:modeldiagextpoly}.
For
every $L$ as above, we
find a polynomial $F_L(\xi,w)$ such that
\begin{equation} \label{eqn:polyextnslices}
f(L\xi,\overline{L\xi}) = F_L\bigl(\xi,Q(L\xi,\overline{L\xi})\bigr)
= \sum\limits_{2j+\abs{\alpha}=d}\, c_{\alpha
j}(\omega_1,\omega_2,\bar{\omega}_1,\bar{\omega}_2)\xi^\alpha Q^j .
\end{equation}
To see that the $c_{\alpha j}$'s are independent of $\bar{\omega}$ we use
the CR vector field of $w=Q(L\xi,\overline{L\xi})$.
Without loss of generality suppose that $\omega_1=0$.  The argument is the
same for higher dimensions, so for simplicity assume $n=3$, that is $\omega_2
\in \C$.
Then, consider the CR vector field $X$ of
\begin{equation}
w=Q(\xi_1,\xi_2,\omega_2\xi_2,\bar{\xi}_1,\bar{\xi}_2,\overline{\omega_2\xi_2})
=
\widetilde{Q}(\xi_1,\xi_2,\omega_2,\bar{\xi}_1,\bar{\xi}_2,\bar{\omega}_2)
\end{equation}
given by
\begin{equation}
X=\widetilde{Q}_{\bar{\xi}_2}\frac{\partial}{\partial\bar{\omega}_2} -
\widetilde{Q}_{\bar{\omega}_2}\frac{\partial}{\partial\bar{\xi}_2}
=\left(Q_{\bar{z}_2}+Q_{\bar{z}_3}\bar{\omega}_2\right)\frac{\partial}{\partial\bar{\omega}_2}
- Q_{\bar{z}_3}\bar{\xi}_2\frac{\partial}{\partial\bar{\xi}_2}.
\end{equation}
Since $f\vert_{M_L}$ is CR, applying $X$ to \eqref{eqn:polyextnslices} gives
us that the $c_{\alpha j}$'s are independent of $\bar{\omega}$.  That is
because $\xi^\alpha Q^j$ are CR and hence $X$ only hits the $c_{\alpha j}$.

Keeping with $n=3$ for simplicity, we have
\begin{equation}
f(
\xi_1,\xi_2, \omega_2 \xi_2,
\bar{\xi}_1,\bar{\xi}_2, \bar{\omega}_2 \bar{\xi}_2
)
= \sum\limits_{2j+\abs{\alpha}=d}\, c_{\alpha
j}(0,\omega_2)\xi^\alpha Q^j .
\end{equation}
Setting $\xi_1 = z_1$,
$\xi_2 = z_2$,
$\omega_2 = \frac{z_3}{z_2}$, we obtain
\begin{equation}
f(z,\bar{z})
= \sum\limits_{2j+\abs{\alpha}=d}\, c_{\alpha
j}\biggl(0,\frac{z_3}{z_2}\biggr)z_1^{\alpha_1}z_2^{\alpha_2} Q^j .
\end{equation}
We have found a rational extension to $\C^3 \times \C$,
with a possible pole when $z_2 = 0$.  Doing the same argument with $\omega_2
= 0$, we find another rational extension with a possible pole at $z_1 = 0$.
Outside any poles the extensions must be identical as the extension near CR
points is unique as a holomorphic function.  The poles are therefore
only on the set $z_2 = z_1 = 0$, which means there are no poles.  We obtain
a polynomial extension $F(z,w)$.
\end{proof}


\section{Local Extension}
\label{section:local}

In this section, we prove Theorem \ref{thm:mainlocal}, the local version of the main theorem.

\begin{prop}\label{prop:FormalPowerSeries}
Let $M \subset \C^{n+1}$, $n\ge 2$, be a holomorphically flat real
codimension two
real-analytic submanifold with a
nondegenerate
CR singularity
at $0 \in M$.
Suppose $M$ is defined by $w = \rho(z,\bar{z})$, for $(z,w) \in \C^{n} \times \C$,
with $\rho$ real-valued, $\rho(0)=0$, and  $d\rho(0) = 0$.

Suppose $f \in C^{\omega}(M)$
such that $f|_{M_{CR}}$ is a CR function.
There exists a formal power series $F(z,w)$ for $f$ at the origin,
that is, $F\bigl(z,\rho(z,\bar{z})\bigr)$ is equal to $f$ formally at the
origin.
\end{prop}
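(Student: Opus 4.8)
The plan is to build $F$ one weighted-homogeneous piece at a time, using the polynomial extension on the quadric model (Lemma~\ref{lem:PolyExtn}) as the engine. Assign weight $1$ to each $z_j$ and $\bar z_j$ and weight $2$ to $w$, and write $\rho = Q + E$ with $Q = A + B + \overline{B}$ the quadric part and $E = O(3)$. I seek $F(z,w) = \sum_{d\ge 0} F_d(z,w)$, where each $F_d$ is weighted homogeneous of degree $d$, i.e. $F_d(z,w) = \sum_{j+2k=d} P_j(z)\, w^k$ with $P_j$ homogeneous of degree $j$. The crucial bookkeeping is that, since $E = O(3)$, substituting $w=\rho$ gives $F_d(z,\rho) = F_d(z,Q) + (\text{terms of ordinary degree} \ge d+1)$, with $F_d(z,Q)$ homogeneous of ordinary degree $d$ in $(z,\bar z)$. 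Writing $[\,\cdot\,]_d$ for the degree-$d$ homogeneous part in $(z,\bar z)$, this gives $[F(z,\rho)]_d = F_d(z,Q) + \sum_{k<d}[F_k(z,\rho)]_d$, since $F_k(z,\rho)$ has ordinary degree $\ge k$.

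First I would reduce the CR hypothesis to an identity of power series. Parametrizing $M$ by $z$, the type $(0,1)$ tangent vector fields are the $\bar L = \sum_j a_j \partial_{\bar z_j}$ with $\sum_j a_j \rho_{\bar z_j} = 0$, so $f|_{M_{CR}}$ being CR is equivalent to $f_{\bar z_j}\rho_{\bar z_k} = f_{\bar z_k}\rho_{\bar z_j}$ for all $j,k$, holding on $M_{CR}$. Because $A$ is nondegenerate, $\bar\partial\rho$ has nonvanishing linear part, so the CR singular set $\{\bar\partial\rho = 0\}$ is a proper real-analytic subvariety and $M_{CR}$ is dense near $0$; by real-analyticity the identity $f_{\bar z_j}\rho_{\bar z_k} = f_{\bar z_k}\rho_{\bar z_j}$ then holds identically as convergent power series at the origin. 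The same reduction applies to any difference of functions that are CR on $M_{CR}$.

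Now I would run the induction on $d$. Having chosen $F_0,\dots,F_{d-1}$ so that $[F^{(<d)}(z,\rho)]_e = f_e$ for all $e<d$, where $F^{(<d)} = \sum_{k<d}F_k$, set the remainder $R = f - F^{(<d)}(z,\rho)$. Since $F^{(<d)}(z,w)$ is a polynomial, its restriction to $M$ is CR on $M_{CR}$, so $R$ is CR on $M_{CR}$ and hence satisfies $R_{\bar z_j}\rho_{\bar z_k} = R_{\bar z_k}\rho_{\bar z_j}$ identically. Moreover $R$ vanishes to order $d$, with $\tilde f_d := [R]_d = f_d - \sum_{k<d}[F_k(z,\rho)]_d$. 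Extracting the lowest-degree (degree $d$) homogeneous component of this identity—using that $R_{\bar z_j}$ begins in degree $d-1$ while $\rho_{\bar z_k} = Q_{\bar z_k} + O(2)$ begins in degree $1$—yields $(\tilde f_d)_{\bar z_j}Q_{\bar z_k} = (\tilde f_d)_{\bar z_k}Q_{\bar z_j}$, which is exactly the statement that $\tilde f_d$ is CR on $M^{quad}_{CR}$. Lemma~\ref{lem:PolyExtn} then produces a weighted-homogeneous polynomial $F_d(z,w)$ of degree $d$ with $F_d(z,Q) = \tilde f_d$. By the bookkeeping formula, $[F(z,\rho)]_d = F_d(z,Q) + \sum_{k<d}[F_k(z,\rho)]_d = \tilde f_d + \sum_{k<d}[F_k(z,\rho)]_d = f_d$, closing the induction. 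Summing, $F = \sum_{d\ge 0} F_d$ is a formal power series with $F(z,\rho) = f$ formally.

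The main obstacle is the propagation of the CR condition to the leading homogeneous part: one must guarantee that the degree-$d$ term $\tilde f_d$ of the (still CR) remainder is CR on the quadric model $M^{quad}$, so that Lemma~\ref{lem:PolyExtn} applies at every stage. This is precisely what the lowest-order component of the power-series identity $R_{\bar z_j}\rho_{\bar z_k} = R_{\bar z_k}\rho_{\bar z_j}$ delivers, and the nondegeneracy of $A$ is what ensures both that $M_{CR}$ is dense (so that the pointwise CR condition upgrades to an identity) and that $Q$ is the correct linearization. The remaining ingredients—the degree accounting and the passage from ``CR on a dense set'' to ``identical''—are routine.
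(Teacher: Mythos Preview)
Your proof is correct and follows essentially the same approach as the paper: peel off the lowest homogeneous part of the (running) remainder, observe it is CR on the quadric model by extracting the lowest-order piece of the CR equations, apply Lemma~\ref{lem:PolyExtn} to write it as $F_d(z,Q)$, and iterate. Your determinantal form $f_{\bar z_j}\rho_{\bar z_k}=f_{\bar z_k}\rho_{\bar z_j}$ of the CR condition is exactly the paper's vector-field computation $Xf=X^{quad}f_k+O(k+1)$, and your more explicit bookkeeping with $\tilde f_d$ and the density argument for $M_{CR}$ fills in details the paper leaves implicit.
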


\begin{proof}
Write $M$ as
\begin{equation}
w = \rho(z,\bar{z}) = Q(z,\bar{z}) + E(z,\bar{z}) ,
\end{equation}
where $E$ is $O(3)$.
Parametrizing $M$ by $z$, decompose $f$ using the $z$ variables
into homogeneous parts
\begin{equation}
f(z,\bar{z})
=
f_k(z,\bar{z}) + \sum_{j=k+1}^\infty f_j(z,\bar{z})
= f_k(z,\bar{z}) + \widetilde{f}(z,\bar{z}) .
\end{equation}

Take a CR vector field $X$ on $M$ and the corresponding CR vector field
$X^{quad}$ on $M^{quad}$:
\begin{align}
X &=
\left(
Q_{\bar{z}_j}
+
E_{\bar{z}_j}
\right)
\frac{\partial}{\partial \bar{z}_k}
-
\left(
Q_{\bar{z}_k}
+
E_{\bar{z}_k}
\right)
\frac{\partial}{\partial \bar{z}_j} ,
\\
X^{quad} &=
Q_{\bar{z}_j} \frac{\partial}{\partial \bar{z}_k}
-
Q_{\bar{z}_k} \frac{\partial}{\partial \bar{z}_j} .
\end{align}
Then as $f$ is CR
\begin{equation}
0 = Xf =
X (f_k + \widetilde{f}) =
\left(
Q_{\bar{z}_j}
\right)
\frac{\partial f_k}{\partial \bar{z}_k}
-
\left(
Q_{\bar{z}_k}
\right)
\frac{\partial f_k}{\partial \bar{z}_j}
+
O(k+1)
=
X^{quad} f_k + O(k+1) .
\end{equation}
Therefore $X^{quad} f_k = 0$ and
$f_k(z,\bar{z})$ is a CR function on the model $M^{quad}$
given by $w = Q(z,\bar{z})$.  By Lemma~\ref{lem:PolyExtn}, we can thus write
$f_k(z,\bar{z}) = F_k\bigl(z,Q(z,\bar{z})\bigr)$ for some
weighted homogeneous $F_k(z,w)$.
Now $F_k\bigl(z,\rho(z,\bar{z})\bigr)$ is a CR function on $M$ whose
terms of degree $k$ are precisely $f_k$.  Therefore
the difference
\begin{equation}
f(z,\bar{z}) - F_k\bigl(z,\rho(z,\bar{z})\bigr)
\end{equation}
is a CR function of one higher order, and so we obtain a
formal power series.
\end{proof}

Next we prove the convergence of the formal power series, first in the case $n=1$.

\begin{lemma}\label{lem:ConfFormalPowerSeries}
Let $M \subset \C^{2}$ be a
real codimension two
real-analytic submanifold with a
nondegenerate
CR singularity
at $0 \in M$ defined by $w = \rho(z,\bar{z})$, for $(z,w) \in \C^2$,
with $\rho(0) = 0$ and $d\rho(0) = 0$.

Suppose $f \in C^{\omega}(M)$ admits a formal power series
$F(z,w)$, that is,
$F\bigl(z,\rho(z,\bar{z})\bigr)$ is equal to $f$ formally.
Then $F$ is convergent.
\end{lemma}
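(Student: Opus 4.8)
The plan is to pass to the complexification and realize the formal series $F$ as a trace of the convergent function $f$ over the branched cover defined by $\rho$. I would treat $z$ and $\zeta$ (a stand-in for $\bar z$) as independent variables, so that $f(z,\zeta)$ and $\rho(z,\zeta)$ are convergent (holomorphic) germs at $0\in\C^2$ and the hypothesis reads $F\bigl(z,\rho(z,\zeta)\bigr)=f(z,\zeta)$ as an identity of formal power series, where by nondegeneracy $\rho(z,\zeta)=z\zeta+O(2)$ after normalizing the Hermitian coefficient to $1$.

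First I would dispose of the generic case, in which $\rho$ is regular in $\zeta$, i.e.\ $\rho(0,\zeta)\not\equiv 0$; this holds whenever the Bishop invariant is nonzero (then $\rho(0,\zeta)=\lambda\zeta^2+\cdots$ has order $2$), and more generally whenever $z\nmid\rho$. Weierstrass preparation applied to $\rho(z,\zeta)-w$ in the variable $\zeta$, with $(z,w)$ as parameters, gives $\rho(z,\zeta)-w=\tilde U(z,\zeta,w)\,P(\zeta;z,w)$ with $\tilde U$ a unit and $P$ a Weierstrass polynomial of some degree $m$ whose roots $\zeta_1(z,w),\dots,\zeta_m(z,w)$ stay near $0$. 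Set
\[
G(z,w)=\frac{1}{2\pi i}\oint_{\abs{\zeta}=r} f(z,\zeta)\,\frac{P_\zeta(\zeta;z,w)}{P(\zeta;z,w)}\,d\zeta=\sum_{i=1}^m f\bigl(z,\zeta_i(z,w)\bigr),
\]
which is holomorphic near $0$ because the integrand depends holomorphically on the parameters. In the finite free extension $R=\mathcal{O}_{z,w}[\zeta]/(P)$ one has $\rho(z,\zeta)\equiv w$, so reducing the formal identity modulo $P$ gives $\bar f=F(z,w)\cdot 1$; applying the trace $\operatorname{Tr}_{R/\mathcal{O}_{z,w}}$ (which sends $1\mapsto m$ and $\bar f\mapsto G$) yields $G=mF$ as formal series. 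Since $G$ is convergent, so is $F$.

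The remaining, and genuinely obstructive, case is when $\rho$ fails to be $\zeta$-regular, i.e.\ $z\mid\rho$ (the degenerate elliptic situation, the model being $w=\abs{z}^2$, $\rho=z\zeta$); here the map $(z,\zeta)\mapsto(z,\rho)$ is not finite and the trace argument above has nothing to act on. I would handle this by factoring $\rho=z\,\tilde\rho$ and observing that $\tilde\rho=\rho/z=\zeta+\cdots$ is regular of order $1$ in $\zeta$, so $u=\tilde\rho(z,\zeta)$ is a holomorphic change of the $\zeta$-coordinate; in the coordinates $(z,u)$ the relation becomes $F(z,zu)=g(z,u)$ with $g(z,u)=f\bigl(z,\zeta(z,u)\bigr)$ convergent. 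Matching coefficients then shows that the existence of $F$ forces $g$ to contain no monomial $z^a u^b$ with $a<b$, and that the coefficients of $F$ are exactly those of $g$ with $a\ge b$; the convergence estimate $\abs{g_{ab}}\le CR^{a+b}$ immediately gives $\abs{c_{jk}}\le CR^{j+2k}$, so $F$ converges.

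I expect the main difficulty to be the careful justification that the contour-integral (trace) function $G$ has the formal series $mF$ as its Taylor expansion: one must push the purely formal identity $F(z,\rho)=f$ through the Weierstrass factorization without ever substituting the multivalued branches $\zeta_i(z,w)$ directly, which is precisely what the quotient-ring/trace formalism accomplishes. The degenerate case is elementary once the reduction $w=zu$ is in place, but recognizing that it must be separated out (because $\rho$ need not be $\zeta$-regular) is the key structural point.
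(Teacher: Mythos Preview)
Your argument follows the same two-case strategy as the paper: complexify, split on whether $\rho(0,\zeta)\equiv 0$ (equivalently $\rho(z,0)\equiv 0$, by reality), use Weierstrass preparation plus averaging over the roots in the regular case, and a direct coefficient estimate in the degenerate case. Two differences in execution are worth noting. In Case~1 the paper first normalizes $M$ to $w=\abs{z}^2+c(z^k+\bar z^k)+O(k+1)$ and then argues directly that $f(z,\xi_j)=f(z,\bar z)$ for each root $\xi_j$, so the average equals $f$ on $M$; your contour-integral/trace formulation over $\mathcal{O}_{z,w}[\zeta]/(P)$ is the same computation packaged algebraically, and skips the preliminary normalization. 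In Case~2 the paper invokes Moser's normal form theorem to reduce to $w=z\bar z$ exactly before doing the coefficient estimate, whereas your factorization $\rho=z\tilde\rho$ followed by the coordinate change $u=\tilde\rho(z,\zeta)$ in the complexified $\zeta$-variable reaches the identical estimate $\abs{c_{jk}}\le CR^{j+2k}$ without appealing to that result---a genuine simplification.
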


\begin{proof}
Parametrizing $M$ by $z$, we write $f(z,\bar{z})$ for the value of $f$ on $M$ at
$\bigl(z,\rho(z,\bar{z})\bigr)$ as usual.
We may locally complexify and treat $z$ and $\bar{z}$ as independent
variables.

\textbf{Case 1:}
$\rho(z,0) \not\equiv 0$.

After a change of variables, successively taking changes of coordinates sending
$z$ to $z + a z^j w^k$, such an $M$ can be represented
by
\begin{equation}
w=\abs{z}^2+c\left(z^k+\bar{z}^k\right) + O(k+1)
\end{equation}
for some $c>0$ and $k\ge 2$.
Using Weierstrass, we may locally solve for $\bar{z}$ in terms of $w$ and
$z$.  Let us denote these solutions by $\xi_1(z,w)$, \ldots, $\xi_k(z,w)$; one of these is the complex conjugate of $z$.
Let $\xi(z,w)$ be any of the $\xi_j$'s.
So,
\begin{equation} \label{eq:avgout}
\widetilde{F}(z,w) = \frac{1}{k}\sum\limits_{j=1}^k f\bigl(z,\xi_j(z,w)\bigr)
\end{equation}
is a well-defined holomorphic function in a neighborhood of the origin
because the right hand side is a symmetric function of the $\xi$.
See e.g.\ \cite{Whitney:book}*{Lemma 8A in chapter 1}.

When $w=\rho(z,\bar{z})$, we have
\begin{equation}
f(z,\xi)\overset{formally}{=}F(z,\rho(z,\xi))=F\bigl(z,\rho(z,\bar{z})\bigr)\overset{formally}{=}f(z,\bar{z}).
\end{equation}
By formally we mean that we are only testing that the equality holds up to any finite order.
Since $f$ is real-analytic, we obtain $f(z,\xi) = f(z,\bar{z})$ when $w =
\rho(z,\bar{z})$.  That is, $f$ is invariant under replacing $\bar{z}$ with any of the $\xi_j$'s.
In other words, when $w = \rho(z,\bar{z})$, then in \eqref{eq:avgout} all
terms in the sum are equal to $f(z,\bar{z})$ and therefore
\begin{equation}
\widetilde{F}\bigl(z,\rho(z,\bar{z})\bigr) = f(z,\bar{z}).
\end{equation}
The formal power series for $f$ in terms of $z$ and $w$ must be unique, so
$F(z,w) = \widetilde{F}(z,w)$ as power series, and therefore $F$ converges.

\textbf{Case 2.} Suppose $\rho(z,0) \equiv 0$.

The submanifold $M$ has an infinite Moser invariant.
Moser~\cite{Moser85} proved there exists a local
biholomorphic change of variables near zero so that $M$ is given by
\begin{equation}
w = z\bar{z} .
\end{equation}
Write
\begin{equation}
f(z,\bar{z}) =
\sum c_{k,j} z^k {\bar{z}}^j .
\end{equation}
Cauchy estimates give
$\sabs{c_{k,j}} \leq \frac{M}{\epsilon^{k+j}}$ for some $\epsilon > 0$.
As the power series $F$ can be written in terms of $z$ and $w = z\bar{z}$, we note that
$c_{k,j} = 0$ if $k < j$.  That is writing $d_{k,j} = c_{k+j,j}$
\begin{equation}
F(z,w) = \sum_{j,k} d_{k,j} z^kw^j .
\end{equation}
Because
$\sabs{d_{k,j}} = \sabs{c_{k+j,j}} \leq \frac{M}{\epsilon^{k}
\epsilon^{2j}}$, the series converges.
\end{proof}

We can now finish the proof of Theorem~\ref{thm:mainlocal}.

\begin{proof}[Proof of Theorem~\ref{thm:mainlocal}]
By the Proposition~\ref{prop:FormalPowerSeries} we obtain
a formal power series $F(z,w)$, such that
$f(z,\bar{z}) = F\bigl(z,\rho(z,\bar{z})\bigr)$.

Given any nonzero $c \in \C^n$ we note that using coordinates
$(\xi,w) \in \C \times \C$, we have an $M_c$
\begin{equation}
w = \rho(c\xi,\overline{c\xi}) ,
\end{equation}
and an $f(c\xi,\overline{c\xi})$, which has a formal power series $F(c\xi,w)$
at the origin.

The submanifold $M_c$ is nondegenerate for an open dense set of $c \in \C^n$.
Therefore $F(c\xi,w)$ converges by the Lemma~\ref{lem:ConfFormalPowerSeries} for an open dense set
of $c \in \C^n$.
Therefore $F(z,w)$ converges
via a standard Baire category argument
(see e.g.\ \cite{BER:book}*{Theorem 5.5.30}).
\end{proof}


\section{Hartogs-Severi}
\label{section:hartogsseveri}

The Hartogs-Severi result cannot be simply extended to the smooth case.
This also means that Theorem~\ref{thm:mainglobal} fails in the
smooth case.  Counterexamples have appeared in the literature, but
let us discuss a simple counterexample as it pertains to our setup.

Let $U \subset \C^2 \times \R$ in the coordinates
$(z,t) \in \C^2 \times \R$ be given by
\begin{equation}
\begin{aligned}
U & =
\{
(z,t) \in \C^2 \times \R :
\norm{z} < 3, -1 < t < 3
\}
= B_3(0) \times (-1,3) ,
\\
K & =
\{
(z,t) \in \C^2 \times \R :
\norm{z} \leq 2, 1 \leq t \leq 2
\}
\\
& \phantom{{}={}} \cup
\{
(z,t) \in \C^2 \times \R :
1 \leq \norm{z} \leq 2, 0 \leq t < 1
\} .
\end{aligned}
\end{equation}
Then
\begin{equation}
\begin{aligned}
U \setminus K & =
\{
(z,t) \in \C^2 \times \R :
2 < \norm{z} < 3, 0 \leq t \leq 2
\}
\\
& \phantom{{}={}} \cup
\{
(z,t) \in \C^2 \times \R :
\norm{z} < 3, t \in (-1,0) \cup (2,3)
\}
\\
& \phantom{{}={}} \cup
\{
(z,t) \in \C^2 \times \R :
\norm{z} < 1, 0 \leq t < 1
\}
\\
& = U_1 \cup U_2 \cup U_3 .
\end{aligned}
\end{equation}
Define the smooth function $f \colon U \setminus K \to \C$ by
\begin{equation}
f(z,t)
=
\begin{cases}
0 & \text{if $(z,t) \in U_1$ or $U_2$} , \\
e^{-1/t^2} & \text{if $(z,t) \in U_3$} .
\end{cases}
\end{equation}
As $f$ is constant for fixed $t$, it is CR.
Any CR function on $U$ would have to be holomorphic on the entire leaf
$\{ (z,t) : \norm{z} < 3, t=\frac{1}{2} \}$.
Since the function $f$ is zero for
$\{ (z,t) : 2 < \norm{z} < 3, t=\frac{1}{2} \}$, any extension must be
identically zero on
$\{ (z,t) : \norm{z} < 3, t=\frac{1}{2} \}$,
while $f$ is nonzero on
$\{ (z,t) : \norm{z} < 1, t=\frac{1}{2} \}$.  Therefore
the extension cannot hold in the smooth case.

\begin{figure}[ht]
\includegraphics[width=2in]{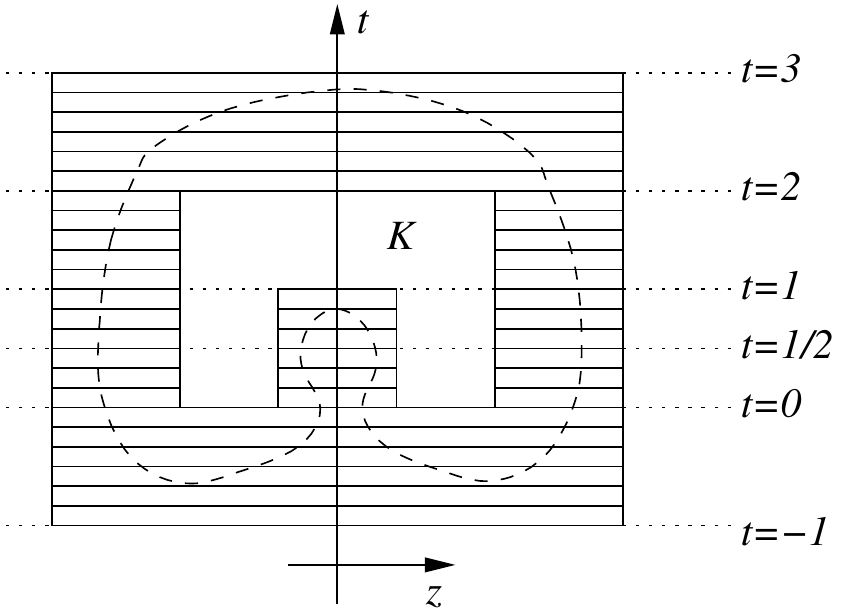}
\caption{Counterexample for smooth Hartogs with
parameters.\label{ctrex:fig}}
\end{figure}

Next we could fit a domain $\Omega$ so that $\partial \Omega$ lies inside
$U \setminus K$ in the way demonstrated in
Figure~\ref{ctrex:fig} by the dashed line.  As nondegenerate CR singularities are the
generic situation, we can ensure that $\partial \Omega$ only has such
singularities by taking a small perturbation if necessary.

Finally let us provide a sketch of a proof of Hartogs-Severi for $n \geq 2$.
We do this for two reasons:
In this case the argument is simpler than that found in the literature,
if we assume the solution to the standard Hartogs theorem in $\C^n$, and it
makes the present paper more self-contained.  We prove the following
statement:

\emph{Let $U \subset \C^{n} \times \R$, $n \geq 2$, be a bounded domain,
$K \subset \subset U$ a compact set such that $U \setminus K$
is connected, and $f \colon U \setminus K \to \C$ a real-analytic CR
function.  Then $f$ extends uniquely to
a real-analytic CR function of $U$.}

\begin{proof}[Sketch of proof]
Denote the variables by $(z,t) \in \C^n \times \R$.
Let $S_{t} = \{ z \in \C^n : (z,t) \in U \setminus K \}$, and
denote its topological components by $V_1,\ldots V_k$.
Let $S'_{t}$ be a union of the $V_j$ such that $V_j$ is not contained
in a compact component of $\C^{n} \setminus V_\ell$ for any other $\ell$.
Let $S''_{t}$ be the union of the components of $V_j$ that are in
$S'_{t}$ together with the compact components of $\C^{n} \setminus V_j$.

Then let $U' = \{ (z,t) \in \C^n : z \in S'_t \}$ and
$U'' = \{ (z,t) \in \C^n : z \in S''_t \}$.
It can be checked that $U'$ and $U''$ are open and $U' \subset U \setminus K
\subset U''$.
For each fixed $t$, using the standard Hartogs theorem on each component of $S'_t$
we extend $f$ restricted to $S'_t$ to a holomorphic function on $S''_t$.
That is we obtain a function $F \colon U'' \to \C$ that is holomorphic in
the $z$ variable and such that $f|_{U'} = F|_{U'}$.  We need to show
regularity and that $F$ equals to $f$ on all of $U \setminus K$.

Pick some $t_0$, and pick a compact
smooth hypersurface
$\Gamma$ in $S'_{t_0}$ around a compact component of the complement of
$S'_{t_0}$.
Inside $\Gamma$ we write $F(z,t_0)$ as the Bochner-Martinelli integral over
$\Gamma$.  Given that $\Gamma$ is compact we note that the
same $\Gamma$ will work for a small neighborhood of $t$ around $t_0$.
The Bochner-Martinelli kernel is real-analytic and hence $F(z,t)$ is real
analytic.  Therefore we have a real-analytic function $F|_{U}$ that agrees
with $f$ on $U'$.  By the uniqueness theorem
for real-analytic functions
and the connectedness of $U \setminus K$, we find that
$F|_{U}$ agrees with $f$ on $U \setminus K$.
\end{proof}


\def\MR#1{\relax\ifhmode\unskip\spacefactor3000 \space\fi%
  \href{http://www.ams.org/mathscinet-getitem?mr=#1}{MR#1}}

\begin{bibdiv}
\begin{biblist}

\bib{BER:book}{book}{
   author={Baouendi, M. Salah},
   author={Ebenfelt, Peter},
   author={Rothschild, Linda Preiss},
   title={Real submanifolds in complex space and their mappings},
   series={Princeton Mathematical Series},
   volume={47},
   publisher={Princeton University Press, Princeton, NJ},
   date={1999},
   pages={xii+404},
   isbn={0-691-00498-6},
   review={\MR{1668103}},
}

\bib{Bochner:54}{article}{
   author={Bochner, S.},
   title={Green's formula and analytic continuation},
   conference={
      title={Contributions to the theory of partial differential equations},
   },
   book={
      series={Annals of Mathematics Studies, no. 33},
      publisher={Princeton University Press, Princeton, N. J.},
   },
   date={1954},
   pages={1--14},
   review={\MR{0068646}},
}

\bib{Bishop65}{article}{
   author={Bishop, Errett},
   title={Differentiable manifolds in complex Euclidean space},
   journal={Duke Math.\ J.},
   volume={32},
   date={1965},
   pages={1--21},
   issn={0012-7094},
   review={\MR{0200476}},
}

\bib{Brown:36}{article}{
   author={Brown, Arthur B.},
   title={On certain analytic continuations and analytic homeomorphisms},
   journal={Duke Math. J.},
   volume={2},
   date={1936},
   number={1},
   pages={20--28},
   issn={0012-7094},
   review={\MR{1545903}},
}

\bib{Burcea}{article}{
  author={Burcea, Valentin},
  title={A normal form for a real 2-codimensional submanifold in
         $\mathbb{C}^{N+1}$ near a CR singularity},
  journal={Adv.\ Math.},
  volume={243},
  year={2013},
  pages={262--295},
  review={\MR{3062747}},
}

\bib{Burcea2}{article}{
   author={Burcea, Valentin},
   title={On a family of analytic discs attached to a real submanifold
   $M\subset{\mathbb C}^{N+1}$},
   journal={Methods Appl.\ Anal.},
   volume={20},
   date={2013},
   number={1},
   pages={69--78},
   issn={1073-2772},
   review={\MR{3085782}},
}

\bib{Coffman}{article}{
   author={Coffman, Adam},
   title={CR singularities of real fourfolds in $\mathbb C^3$},
   journal={Illinois J. Math.},
   volume={53},
   date={2009},
   number={3},
   pages={939--981 (2010)},
   issn={0019-2082},
   review={\MR{2727363}},
}

\bib{DTZ}{article}{
   author={Dolbeault, Pierre},
   author={Tomassini, Giuseppe},
   author={Zaitsev, Dmitri},
   title={On boundaries of Levi-flat hypersurfaces in ${\mathbb C}^n$},
   language={English, with English and French summaries},
   journal={C.\ R.\ Math.\ Acad.\ Sci.\ Paris},
   volume={341},
   date={2005},
   number={6},
   pages={343--348},
   issn={1631-073X},
   review={\MR{2169149}},
}

\bib{DTZ2}{article}{
   author={Dolbeault, Pierre},
   author={Tomassini, Giuseppe},
   author={Zaitsev, Dmitri},
   title={Boundary problem for Levi flat graphs},
   journal={Indiana Univ.\ Math.\ J.},
   volume={60},
   date={2011},
   number={1},
   pages={161--170},
   issn={0022-2518},
   review={\MR{2952414}},
}

\bib{GongLebl}{article}{
   author={Gong, Xianghong},
   author={Lebl, Ji\v{r}\'\i},
   title={Normal forms for CR singular codimension-two Levi-flat submanifolds},
   journal={Pacific J.\ Math.},
   volume={275},
   date={2015},
   number={1},
   pages={115--165},
}

\bib{Harris}{article}{
   author={Harris, Gary Alvin},
   title={The traces of holomorphic functions on real submanifolds},
   journal={Trans.\ Amer.\ Math.\ Soc.},
   volume={242},
   date={1978},
   pages={205--223},
   issn={0002-9947},
   review={\MR{0477120}},
}

\bib{HenkinMichel:02}{article}{
   author={Henkin, G.},
   author={Michel, V.},
   title={Principe de Hartogs dans les vari\'et\'es CR},
   language={French, with English and French summaries},
   journal={J.\ Math.\ Pures Appl.\ (9)},
   volume={81},
   date={2002},
   number={12},
   pages={1313--1395},
   issn={0021-7824},
   review={\MR{1952165}},
}

\bib{HuangKrantz95}{article}{
   author={Huang, Xiaojun},
   author={Krantz, Steven G.},
   title={On a problem of Moser},
   journal={Duke Math.\ J.},
   volume={78},
   date={1995},
   number={1},
   pages={213--228},
   issn={0012-7094},
   review={\MR{1328757}},
   doi={10.1215/S0012-7094-95-07809-0},
}

\bib{HuangYin09}{article}{
   author={Huang, Xiaojun},
   author={Yin, Wanke},
   title={A Bishop surface with a vanishing Bishop invariant},
   journal={Invent.\ Math.},
   volume={176},
   date={2009},
   number={3},
   pages={461--520},
   issn={0020-9910},
   review={\MR{2501295}},
}
\bib{HuangYin09:codim2}{article}{
   author={Huang, Xiaojun},
   author={Yin, Wanke},
   title={A codimension two CR singular submanifold that is formally
   equivalent to a symmetric quadric},
   journal={Int.\ Math.\ Res.\ Not.\ IMRN},
   date={2009},
   number={15},
   pages={2789--2828},
   issn={1073-7928},
   review={\MR{2525841}},
}

\bib{HuangYin:flattening1}{article}{
   author={Huang, Xiaojun},
   author={Yin, Wanke},
   title={Flattening of CR singular points and analyticity of the local hull
   of holomorphy I},
   journal={Math. Ann.},
   volume={365},
   date={2016},
   number={1-2},
   pages={381--399},
   issn={0025-5831},
   review={\MR{3498915}},
}

\bib{HuangYin:flattening2}{unpublished}{
   author={Huang, Xiaojun},
   author={Yin, Wanke},
  title={Flattening of CR singular points and analyticity of local hull of holomorphy II},
  note={preprint, see also \href{http://arxiv.org/abs/1210.5146}{arXiv:1210.5146}}
}

\bib{KenigWebster:82}{article}{
   author={Kenig, Carlos E.},
   author={Webster, Sidney M.},
   title={The local hull of holomorphy of a surface in the space of two
   complex variables},
   journal={Invent.\ Math.},
   volume={67},
   date={1982},
   number={1},
   pages={1--21},
   issn={0020-9910},
   review={\MR{664323}},
}

\bib{LMSSZ}{article}{
   author={Lebl, Ji{\v{r}}{\'{\i}}},
   author={Minor, Andr{\'e}},
   author={Shroff, Ravi},
   author={Son, Duong},
   author={Zhang, Yuan},
   title={CR singular images of generic submanifolds under holomorphic maps},
   journal={Ark.\ Mat.},
   volume={52},
   date={2014},
   number={2},
   pages={301--327},
   issn={0004-2080},
   review={\MR{3255142}},
}

\bib{LNR}{article}{
   author={Lebl, Ji{\v{r}}{\'{\i}}},
   author={Noell, Alan},
   author={Ravisankar, Sivaguru},
   title={Extension of CR Functions from Boundaries in ${\bf C}^n\times{\bf R}$},
   journal={Indiana Univ.\ Math.\ J.},
   volume={},
   date={},
   number={},
   pages={to appear},
   issn={},
   review={},
   note={\href{http://arxiv.org/abs/1505.05255}{arXiv:1505.05255}}
}

\bib{Moser85}{article}{
   author={Moser, J{\"u}rgen K.},
   title={Analytic surfaces in ${\bf C}^2$ and their local hull of
   holomorphy},
   journal={Ann.\ Acad.\ Sci.\ Fenn.\ Ser.\ A I Math.},
   volume={10},
   date={1985},
   pages={397--410},
   issn={0066-1953},
   review={\MR{802502}},
}

\bib{MoserWebster83}{article}{
   author={Moser, J{\"u}rgen K.},
   author={Webster, Sidney M.},
   title={Normal forms for real surfaces in ${\bf C}^{2}$ near complex
   tangents and hyperbolic surface transformations},
   journal={Acta Math.},
   volume={150},
   date={1983},
   number={3--4},
   pages={255--296},
   issn={0001-5962},
   review={\MR{709143}},
}

\bib{Severi:32}{article}{
   author={Severi, F.},
   title={Una propriet{\` a} fondamentale dei campi di olomorfismo di una
variabile reale e di una variabile complessa},
   journal={Atti della Reale Accademia Nazionale dei Lincei, Rome, Rendiconti,
(6)},
   volume={15},
   year={1932},
   pages={487--490}
}

\bib{Shabat:book}{book}{
   author={Shabat, B.~V.},
   title={Introduction to complex analysis. Part II},
   series={Translations of Mathematical Monographs},
   volume={110},
   note={Functions of several variables;
   Translated from the third (1985) Russian edition by J. S. Joel},
   publisher={American Mathematical Society},
   place={Providence, RI},
   date={1992},
   pages={x+371},
   isbn={0-8218-4611-6},
   review={\MR{1192135}},
}

\bib{Slapar:16}{article}{
   author={Slapar, Marko},
   title={On Complex Points of Codimension 2 Submanifolds},
   journal={J. Geom. Anal.},
   volume={26},
   date={2016},
   number={1},
   pages={206--219},
   issn={1050-6926},
   review={\MR{3441510}},
}

\bib{Whitney:book}{book}{
   author={Whitney, Hassler},
   title={Complex analytic varieties},
   publisher={Addison-Wesley Publishing Co., Reading, Mass.-London-Don
   Mills, Ont.},
   date={1972},
   pages={xii+399},
   review={\MR{0387634}},
}

\end{biblist}
\end{bibdiv}


\end{document}